\newtheorem{theorem}{Theorem}[equation] 
\newtheorem{lemma}[equation]{Lemma}     
\newtheorem{corollary}[equation]{Corollary}
\newtheorem{proposition}[equation]{Proposition}
\newcommand{\R}{\boldsymbol{R}}
\newcommand{\N}{\boldsymbol{N}}
\newcommand{\Z}{\boldsymbol{Z}}
\newcommand{\e}{\varepsilon}
\newcommand{\lbd}{\lambda}
\newcommand{\Schw}{\mathcal{S}}
\newcommand{\Ge}{\mathfrak{g}}
\newcommand{\Ges}{\mathfrak{g}^{\star}}
\newcommand{\Te}{\mathfrak{z}}
\newcommand{\T}{\mathfrak{z}^{\star}}
\renewcommand{\phi}{\varphi}
\newcommand{\op}{{\rm Op\,}}
\numberwithin{equation}{section}
\title[Invertibility on homogeneous groups]
{Invertibility of convolution operators on homogeneous groups} 
\author{P. G{\l}owacki}
\subjclass[2000]{43A85, 42B15 (primary), 35A08, 35S99, 46F10 (secondary).}
\keywords{Fourier transform, multipliers, symbol classes, convolution operators, homogeneous groups, maximal estimates, parametrices}
\begin{document}
\maketitle

\begin{abstract}
We say that a tempered distribution $A$ belongs to the class $S^m(\Ge)$ on a homogeneous Lie algebra $\Ge$ if its Abelian Fourier transform $a=\widehat{A}$ is a smooth function on the dual $\Ges$ and satisfies the estimates
$$
|D^{\alpha}a(\xi)|\le C_{\alpha}(1+|\xi|)^{m-|\alpha|}.
$$
Let $A\in S^0(\Ge)$. Then the operator $f\mapsto f\star\widetilde{A}(x)$ is bounded  on $L^2(\Ge)$. Suppose that the operator is invertible and denote by $B$ the convolution kernel of its inverse. We show that $B$ belongs to the class $S^0(\Ge)$ as well. As a corollary we generalize Melin's theorem on the parametrix construction for Rockland operators.
\end{abstract}

\section*{Introduction}
In a former paper \cite{glowacki} we describe a calculus of a class of convolution operators on a nilpotent homogeneous group $G$ with the Lie algebra $\Ge$. These operators are distinguished by conditions imposed on the Abelian Fourier transforms of their kernels similar to those required from the $L^p$-multipliers on $\R^n$. More specifically, a tempered distribution $A$ belongs to the class $S^m(G)=S^m(\Ge)$ if its Fourier transform $a=\widehat{A}$ is a smooth function on the dual to the Lie algebra $\Ges$ and satisfies the estimates
$$
|D^{\alpha}a(\xi)|\le C_{\alpha}(1+|\xi|)^{m-|\alpha|},
\qquad
\xi\in\Ges.
$$
In \cite{glowacki} we follow and extend to the setting of a general homogeneous group the ideas of Melin \cite{melin} who first introduced such a calculus on the subclass of stratified groups. The classes $S^m(\Ge)$ of symbols of convolution operators have the expected properties of composition and boundedness (see Propositions \ref{calculus} and \ref{cv} below) which is a generalization of the results of Melin \cite{melin}. However, a complete calculus should also deal with the problem of invertibility. The aim of the present paper is to fill the gap.

Suppose that $A\in S^0(\Ge)$. Then, by the boundedness theorem (see Proposition \ref{cv} below), the operator
$$
f\mapsto f\star\widetilde{A}(x)=\int_{\Ge}f(xy)A(y)\,dy
$$
defined initially on the Schwartz class functions extends uniquely to a bounded operator on $L^2(\Ge)$. Furthermore, suppose that the operator  $f\mapsto f\star\widetilde{A}$ is invertible on $L^2(\Ge)$ and denote by $B$ the convolution kernel of its inverse. We show here that under these circumstances $B$ belongs to the class $S^0(\Ge)$ as well. This is done by replacing Melin's techniques of parametrix construction involving the more refined classes $S^{m,s}(\Ge)\subset S^m(\Ge)$  of convolution operators by the calculus of less restrictive classes $S_0^m(\Ge)$, where no estimates in the central directions are required. 

Let us remark that the described result can be also looked upon as a close analogue of the theorem on the inversion of singular integrals, see \cite{glowacki2} and Christ-Geller \cite{christ-geller}. 

By using auxiliary convolution operators, namely accretive homogeneous kernels $P^m$ smooth away from the origin, we construct "elliptic" operators $V_1^m$ of order $m>0$ and get inversion  results for classes $S^m(\Ge)$ for all $m>0$, which enables us to generalize Melin's theorem on the parametrix construction for Rockland operators. At the same time, however, we present a direct parametrix construction for Rockland operators which avoids the machinery of Melin and also that of the present paper and depends only on well-known properties of Rockland operators as derived in Folland-Stein \cite{folland} and the calculus of \cite{glowacki}. 

We believe that the presented symbolic calculus may be a step towards a more comprehensive pseudodifferential calculus on nilpotent Lie groups parallel to that of Christ-Geller-G{\l}owacki-Polin \cite{cggp}. 

\section{Symbolic calculus.}

Let $\Ge$ be a nilpotent Lie algebra endowed with a family of dilations $\{\delta_t\}_{t>0}$. We identify $\Ge$ with the corresponding nilpotent Lie group by means of the exponential map. Let  
$$
1=p_1<p_2<\dots<p_d
$$ 
be the exponents of homogeneity of the dilations. Let 
$$
\Ge_j=\{x\in\Ge: tx=t^{p_j}\cdot x\},
\hspace{2em}
1\le j\le d.
$$
Denote by $Q=\sum_k\dim\Ge_k\cdot p_k$ the homogeneous dimension of $\Ge$. Let $|\cdot|$ be a homogeneous norm on $\Ge$ and $\rho$ be a smooth function on $\Ge$ such that
$$
c(1+|x|)\le \rho(x)\le C(1+|x|),
\qquad
x\in\Ge,
$$
for some $C\ge c>0$. A similar notation will be applied to the dual space $\Ges$. 

In expressions like $D^{\alpha}$ or $x^{\alpha}$ we shall use multiindices
$$
\alpha=(\alpha_1,\alpha_2,\dots,\alpha_d),
$$
where 
$$
\alpha_k=(\alpha_{k1},\alpha_{k1},\dots,\alpha_{kn_k}),
$$ 
are themselves multiindices with positive integer entries corresponding to the spaces $\Ge_k$ or $\Ges_k$. The homogeneous length of $\alpha$ is defined by
$$
|\alpha|=\sum_{k=1}^d|\alpha_k|,
\qquad|\alpha_k|=\dim\Ge_k\cdot p_k. 
$$

As usual we denote by $\Schw(\Ge)$ or $\Schw(\Ges)$ the Schwartz classes of smooth and rapidly vanishing functions. The Fourier transform 
$$
\widehat{f}(\xi)=\int_{\Ge}f(x)\,e^{-i\langle\xi,x\rangle}\,dx
$$
maps $\Schw(\Ge)$ onto $\Schw(\Ges)$ and extends to tempered distributions on $\Ge$. Let
$$
\|f\|^2=\int_{\Ge}|f(x)|^2\,dx,
\qquad
f\in L^2(\Ge).
$$
A similar notation will be applied to $f\in L^2(\Ges)$, where the Lebesgue measure $d\xi$ on $\Ges$ is normalized so that 
$$
\int_{\Ge}|f(x)|^2\,dx=\int_{\Ges}|\widehat{f}(x)|^2\,d\xi.
$$
The algebra of bounded linear operators on $L^2(\Ge)$ will be denoted by $\mathcal{B}(L^2(\Ge))$.

For a tempered distribution $A$ on $\Ge$, we write 
$$
{\rm Op}(A)f(x)=f\star\widetilde{A}(x)=\int_{\Ge}f(xy)A(dy),
\hspace{2em}
f\in\Schw(\Ge).
$$

Let $m\in\R$. By $S^m(\Ge)=S^m(\Ge,\rho)$ we denote the class of all distributions $A\in\Schw'(\Ge)$ whose Fourier transforms $a=\widehat{A}$ are smooth and satisfy the estimates
\begin{equation}\label{symbolic_estimates}
|D^{\alpha}a(\xi)|\le C_{\alpha}\rho(\xi)^{m-|\alpha|},
\end{equation}
where $|\alpha|$ stands for the homogeneous length of a multiindex. Let us recall that $S^m(\Ge)$ is a Fr\'echet space with the family of seminorms 
$$
|a|_{\alpha}=\sup_{\xi\in\Ges}\rho(\xi)^{-m+|\alpha|}|D^{\alpha}a(\xi)|.
$$
It is not hard to see that for every $\phi\in C_c^{\infty}(\Ge)$ equal to $1$ in a neighbourhood of $0$ the distribution $(1-\phi)A$ is a Schwartz class function. Thus
\begin{equation}\label{split}
A=A_1+F, 
\end{equation}
where $A_1$ is compactly supported and $F\in\Schw(\Ge)$.

It follows from (\ref{split})  that for every $m\in\R$
$$
{\rm Op}(A):\Schw(\Ge)\to\Schw(\Ge)
$$
is a continuous mapping if $A\in S^m(\Ge)$. Therefore, it extends to a continuous mapping, denoted by the same symbol, of $\Schw'(\Ge)$.  It is also clear that for $A\in S^m(\Ge)$ and $B\in S^n(\Ge)$ the convolution $A\star B$ makes sense and $\op(A\star B)=\op(A)\op(B)$.

The following two propositions have been proved in \cite{glowacki}.  

\begin{proposition}\label{calculus}
If $A\in S^m(\Ge)$ and $B\in S^n(\Ge)$, then $A\star B\in S^{m+n}(\Ge)$ and the mapping
$$
S^m(\Ge)\times S^n(\Ge)\ni(A,B)\mapsto A\star B\in S^{m+n}(\Ge)
$$
is continuous.
\end{proposition}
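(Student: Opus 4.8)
The plan is to write $\widehat{A\star B}$ explicitly in terms of $a=\widehat A$ and $b=\widehat B$ via the Baker--Campbell--Hausdorff product, expand the resulting oscillatory integral into a series of ``twisted products'' of derivatives of $a$ and $b$, check term by term that each summand obeys the symbol estimates (\ref{symbolic_estimates}) with exponent $m+n$, and finally control the series by a dyadic decomposition of the kernels that exploits the dilation covariance of convolution.

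First I would reduce to compactly supported kernels. By (\ref{split}), $A=A_1+F$ and $B=B_1+G$ with $A_1,B_1$ compactly supported near $0$ and $F,G\in\Schw(\Ge)$; since the convolution of a compactly supported distribution, or of a Schwartz function, with a Schwartz function is again Schwartz, the cross terms $A_1\star G$, $F\star B_1$, $F\star G$ lie in $\bigcap_k S^k(\Ge)$ and depend continuously on the data, so only $A_1\star B_1$ requires work. For compactly supported kernels the pairing defining $\widehat{A\star B}$ is an honest oscillatory integral,
$$
\widehat{A\star B}(\xi)=\iint_{\Ge\times\Ge}A(x)\,B(y)\,e^{-i\langle\xi,x\rangle}\,e^{-i\langle\xi,y\rangle}\,e^{-i\langle\xi,\beta(x,y)\rangle}\,dx\,dy,
$$
where $x\cdot y=x+y+\beta(x,y)$ and $\beta$ is a polynomial map with values in $[\Ge,\Ge]\subseteq\Ge_2\oplus\dots\oplus\Ge_d$, vanishing to second order at the origin.

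Expanding $e^{-i\langle\xi,\beta(x,y)\rangle}$ in its Taylor series and using that multiplication of $A$ by $x^\mu$ corresponds under the Fourier transform to applying $D^\mu$ to $a$ (and similarly for $B$), one obtains a formal identity
$$
\widehat{A\star B}(\xi)=\sum_{\mu,\nu}c_{\mu\nu}(\xi)\,D^\mu a(\xi)\,D^\nu b(\xi),
$$
with $c_{\mu\nu}$ polynomials on $\Ges$. The key point is a homogeneity count: every bracket appearing in $\beta$ that takes values in $\Ge_\ell$ is dilation-homogeneous of weight $p_\ell$ in $(x,y)$ and is paired against the $\Ges_\ell$-component of $\xi$, which forces $c_{\mu\nu}$ to be dilation-homogeneous of weight exactly $|\mu|+|\nu|$, hence $|c_{\mu\nu}(\xi)|\le C\,\rho(\xi)^{|\mu|+|\nu|}$. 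Feeding this into (\ref{symbolic_estimates}) for $a$ and $b$ and differentiating with the Leibniz rule, each individual term $c_{\mu\nu}\,D^\mu a\,D^\nu b$ satisfies (\ref{symbolic_estimates}) with exponent $m+n$, with constants estimated by finitely many of the seminorms $|a|_\alpha$ and $|b|_\alpha$.

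The main obstacle is that this expansion has infinitely many summands, all genuinely of order $m+n$ — the grading does not let the corrections improve — and it cannot be summed by brute force: already for compactly supported kernels the obvious majorant of its partial sums grows faster than any power of $\rho(\xi)$, so convergence rests entirely on cancellation. To bypass this I would not sum the series at all, but argue in the spirit of the inversion of singular integrals: decompose $A$ and $B$ into dyadic pieces supported in the shells $|x|\sim 2^{-k}$, rescale each piece to the unit scale by the dilations $\delta_{2^k}$ — under which $\star$ is covariant, so that $A\star B$ is reassembled from the unit-scale convolutions of the rescaled pieces — prove the composition estimate once and for all on the unit scale, where the coupling phase involves the single fixed polynomial $\beta$ and the rescaled symbols are bounded in all $C^N$ norms, and then sum the rescaled estimates over the two scale parameters, the far-off-diagonal contributions being absorbed by exactly the cancellation that the homogeneity count predicts. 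Keeping track of the constants throughout gives the continuity of $(A,B)\mapsto A\star B$ from $S^m(\Ge)\times S^n(\Ge)$ into $S^{m+n}(\Ge)$.
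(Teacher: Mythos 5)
The paper does not give a self-contained proof of Proposition~\ref{calculus}; it explicitly cites the earlier work \cite{glowacki} (``The Melin calculus for general homogeneous groups''), where the composition estimate is established by a Littlewood--Paley decomposition of the symbols on the Fourier side, combined with repeated integration by parts in the oscillatory integral representing the twisted product $a\#b$. So there is no in-paper argument to compare against line by line, but your proposal can still be measured against what a correct proof must contain.

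Your preliminary steps are sound: the reduction to compactly supported kernels via (\ref{split}), the oscillatory-integral formula for $\widehat{A\star B}$ using $x\cdot y=x+y+\beta(x,y)$ with $\beta$ valued in $\Ge_2\oplus\cdots\oplus\Ge_d$, and the homogeneity count showing that the formal coefficients $c_{\mu\nu}$ are $\delta_t$-homogeneous of weight $|\mu|+|\nu|$ so that each summand $c_{\mu\nu}\,D^\mu a\,D^\nu b$ individually obeys (\ref{symbolic_estimates}) with exponent $m+n$. You are also right that the resulting series has infinitely many summands of the same order $m+n$, so it cannot be summed by majorization; this is exactly the crux of the matter.

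The gap is in the step that is supposed to resolve this crux. You propose an $x$-space dyadic shell decomposition and then assert that ``the far-off-diagonal contributions [are] absorbed by exactly the cancellation that the homogeneity count predicts.'' That homogeneity count, however, predicts no cancellation at all: it is a \emph{size} estimate on each term, the statement that nothing gets worse, and it is precisely what fails to make the series converge. The cancellation that saves the day comes from a different source, namely the oscillation of the phase $e^{-i\langle\xi,\beta(x,y)\rangle}$ when the two scales are mismatched; extracting it requires integration by parts against the phase (or, equivalently, exploiting vanishing moments of the dyadic pieces), and none of that is in your sketch. Two further difficulties compound this. First, for $A_k$ supported on $|x|\sim 2^{-k}$ and $B_j$ on $|y|\sim 2^{-j}$ with $k\neq j$, there is no single dilation that brings \emph{both} pieces to unit scale simultaneously, so ``prove the composition estimate once and for all on the unit scale'' is not available as stated; one necessarily works with a unit-scale piece convolved with a piece at scale $2^{-|k-j|}$, and the whole point is to win a geometric factor in $|k-j|$. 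Second, the $x$-space shell decomposition you choose does not automatically produce pieces with vanishing moments (unlike a Fourier-side Littlewood--Paley decomposition, where $a_k(0)=0$ for $k\ge1$ forces $\int A_k=0$), so even the most naive cancellation mechanism, namely Taylor expansion of the smoother kernel against the more concentrated one, is unavailable without further preparation. Until a concrete mechanism for the off-diagonal gain is supplied, the argument does not close.
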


\begin{proposition}\label{cv}
If $A\in S^0(\Ge)$, then ${\rm Op}(A)$ is bounded on $L^2(\Ge)$ and the mapping
$$
S^0(\Ge)\ni A\mapsto{\rm Op}(A)\in\mathcal{B}(L^2(\Ge))
$$
is continuous.
\end{proposition}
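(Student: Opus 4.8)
The plan is to establish the $L^2$ bound by a Littlewood--Paley decomposition of the symbol combined with the Cotlar--Stein almost-orthogonality lemma. When $\Ge$ is abelian the statement is the (anisotropic) H\"ormander--Mikhlin multiplier theorem and one would invoke Plancherel directly; the difficulty here is that $\Ge$ is noncommutative, so the Abelian Fourier transform does \emph{not} intertwine the group convolution $f\mapsto f\star\widetilde{A}$ with multiplication by $a=\widehat{A}$, and Plancherel must be replaced by almost orthogonality.

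First I would fix $\psi\in C_c^{\infty}(\Ges)$ supported in $\{1/2\le\rho(\xi)\le 2\}$ with $\sum_{j\in\Z}\psi(\delta_{2^{-j}}\xi)=1$ for $\xi\neq 0$, and set $a=\sum_{j\ge 0}a_j$, where $a_j(\xi)=a(\xi)\psi(\delta_{2^{-j}}\xi)$ for $j\ge 1$ and $a_0$ collects the low frequencies. Each $a_j$ lies in $C_c^{\infty}(\Ges)$, so the kernels $A_j$ defined by $\widehat{A_j}=a_j$ are Schwartz and $\op(A)=\sum_{j\ge 0}\op(A_j)=:\sum_j T_j$ on $\Schw(\Ge)$. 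The role of the \emph{homogeneous} length $|\alpha|$ is decisive here: writing $a_j=b_j\circ\delta_{2^{-j}}$ with $b_j$ supported in the fixed annulus, the chain rule and (\ref{symbolic_estimates}) with $m=0$ give $\sup_{\xi}|D^{\alpha}b_j(\xi)|\le C_{\alpha}$ uniformly in $j$, with $C_{\alpha}$ depending only on the seminorms $|a|_{\beta}$, $\beta\le\alpha$. Non-isotropic integration by parts in the Fourier integral then yields $A_j(x)=2^{jQ}B_j(\delta_{2^j}x)$ with $|B_j(x)|\le C_N(1+|x|)^{-N}$, so that $\|A_j\|_{L^1(\Ge)}\le C$ uniformly; since $\Ge$ is unimodular, Young's inequality gives $\|T_j\|_{\mathcal{B}(L^2(\Ge))}\le C$. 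Also $\int_{\Ge}A_j=a_j(0)=0$ for $j\ge 1$.

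The second and essential step would be the almost-orthogonality estimates. Here $T_j^{*}=\op(A_j^{*})$ with $A_j^{*}(x)=\overline{A_j(x^{-1})}$, which obeys the same bounds and cancellation, and $T_j^{*}T_k=\op(A_j^{*}\star A_k)$, $T_jT_k^{*}=\op(A_j\star A_k^{*})$ by the composition rule $\op(U)\op(V)=\op(U\star V)$. For $j>k\ge 1$ I would use $\int A_j^{*}=0$ to write
$$
A_j^{*}\star A_k(x)=\int_{\Ge}A_j^{*}(y)\bigl[A_k(y^{-1}x)-A_k(x)\bigr]\,dy ,
$$
estimate the bracket by the mean-value theorem on homogeneous groups (Folland--Stein \cite{folland}), which for $|y|$ small bounds it by $C\,|y|$ times a supremum of first-order homogeneous derivatives of $A_k$ over a controlled neighbourhood of $x$, and then insert the rescaled bound $2^{k}\,2^{kQ}(1+2^k|x|)^{-N}$ for such a derivative and integrate first in $x$, then in $y$, splitting the $y$-integral at $|y|\sim 2^{-k}$ to cover the range where the mean-value estimate is not used. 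This gives $\|A_j^{*}\star A_k\|_{L^1(\Ge)}\le C\,2^{-(j-k)}$; the symmetric arrangement (expanding the less concentrated factor) handles $j<k$ and $T_jT_k^{*}$, and the interaction with the single low-frequency term $T_0$ is easier. Hence
$$
\|T_j^{*}T_k\|_{\mathcal{B}(L^2(\Ge))}+\|T_jT_k^{*}\|_{\mathcal{B}(L^2(\Ge))}\le C\,2^{-|j-k|},
$$
and Cotlar--Stein yields $\|\op(A)\|_{\mathcal{B}(L^2(\Ge))}=\bigl\|\sum_j T_j\bigr\|\le C\sum_{\ell\ge 0}2^{-\ell/2}=C'$. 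Since $N$ may be fixed once and for all, $C'$ depends only on finitely many seminorms $|a|_{\alpha}$, which is exactly the asserted continuity of $A\mapsto\op(A)$.

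I expect the main obstacle to be the off-diagonal decay in the second step. Trading Plancherel for almost orthogonality is routine in spirit, but making $\|T_j^{*}T_k\|$ decay requires the quantitative mean-value estimate for the \emph{group} translation $A_k(\cdot)\mapsto A_k(y^{-1}\cdot)$ to be carried out with left-invariant vector fields in the non-isotropic homogeneous geometry --- where the group law is only a polynomial perturbation of vector addition --- and requires the cancellation $\int A_j=0$ to be played against the correct, more concentrated, factor. The first step and the seminorm bookkeeping behind the continuity statement are routine but should be done with some care.
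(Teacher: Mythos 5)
The paper does not reproduce a proof of Proposition~\ref{cv}; it cites the 2007 \emph{Ark.\ Mat.} paper \cite{glowacki}. Your Littlewood--Paley plus Cotlar--Stein argument is, to my knowledge, essentially the route taken there (and in Melin's original stratified case and in \cite{cggp}), so the approach agrees with the source in spirit. Two small points of care: (i) the dyadic partition of unity must be built from the homogeneous norm $|\cdot|$, not from $\rho$ --- since $\rho\ge c>0$, a family $\psi(\delta_{2^{-j}}\,\cdot\,)$ with $\psi$ supported in $\{1/2\le\rho\le2\}$ is not a partition of unity on $\Ges\setminus\{0\}$; replace $\rho$ by $|\cdot|$ in that step and everything else goes through, as your scaling estimates only use $\rho\approx 1+|\cdot|$. (ii) On a general (non-stratified) homogeneous group the Folland--Stein mean value inequality for $A_k(y^{-1}x)-A_k(x)$ produces terms $|y|^{p_j}$ times degree-$p_j$ derivatives for all strata, not merely the first-order one; since all $p_j\ge p_1=1$ and the rescaled derivative of degree $p_j$ contributes $2^{kp_j}$, each such term yields at least $2^{-(j-k)}$, so the conclusion is unaffected, but the statement as written is only literally correct in the stratified case.
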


Let  $\Te$ be the central subalgebra corresponding to the largest eigenvalue of the dilations. We may assume that
\begin{equation}\label{central}
\Ge=\Ge_0\times\Te,
\hspace{2em}
\Ges=\Ges_0\times\T,
\end{equation}
where $\Ge_0$ may be identified with the quotient Lie algebra $\Ge/\Te$. The multiplication law in $\Ge$ can be expressed by
$$
(x,t)(y,s)=(x\circ y, t+s+r(x,y)),
$$
where $x\circ y$ is mutiplication in $\Ge_0$. Here the variable in $\Ge$ has been split in accordance with the given decomposition. In a similar way we also split the variable $\xi=(\eta,\lbd)$ in $\Ges$. 

Let $m\in\R$. By $S_0^m(\Ges)$ we denote the class of all distributions $A\in\Schw'(\Ge)$ whose Fourier transforms $a=\widehat{A}$ are smooth in the variable $\eta$ and satisfy the estimates  
\begin{equation}\label{symbolik_estimates}
|D_{\eta}^{\alpha}a(\eta,\lbd)|\le C_{\alpha}\rho(\eta,\lbd)^{m-|\alpha|}.
\end{equation}
Again, $S_0^m(\Ge)$ is a Fr\'echet space with the family of seminorms 
$$
|a|_{\alpha}=\sup_{(\eta,\lbd)\in\Ges}\rho(\eta,\lbd)^{-m+|\alpha|}|D_{\eta}^{\alpha}a(\eta,\lbd)|.
$$

The following result has not been stated explicitly in \cite{glowacki} but follows by the argument given there.

\begin{proposition}\label{calculus_omega}
If $A\in S_0^m(\Ges)$ and $B\in S_0^n(\Ges)$, then $A\star B\in S_0^{m+n}(\Ges)$ and the mapping
$$
S_0^m(\Ges)\times S_0^n(\Ges)\ni(A,B)\mapsto A\star B\in S_0^{m+n}(\Ges)
$$
is continuous.
\end{proposition}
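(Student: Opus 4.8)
I would prove Proposition~\ref{calculus_omega} by adapting, essentially word for word, the proof of Proposition~\ref{calculus} given in \cite{glowacki}; the only genuinely new observation is that, thanks to the splitting (\ref{central}) of the group, the central frequency $\lbd$ behaves throughout as a \emph{frozen parameter}, so that no $\lbd$-derivatives of the symbols are ever needed. Concretely, using (\ref{split}) I would write $A=A_1+F$ and $B=B_1+G$ with $A_1,B_1$ compactly supported and $F,G\in\Schw(\Ge)$, so that
$$
A\star B=A_1\star B_1+A_1\star G+F\star B_1+F\star G.
$$
Each of the three terms containing a Schwartz factor is again a Schwartz function — if $H\in\Schw(\Ge)$ and $K$ is a compactly supported distribution, then $H\star K$ and $K\star H$ are Schwartz, since for large $|x|$ the points $xy^{-1}$ and $y^{-1}x$ stay large uniformly for $y$ in the (compact) support of $K$ — and $\Schw(\Ge)\subseteq\bigcap_k S_0^k(\Ge)$ with continuous inclusions; hence these three terms already lie in $S_0^{m+n}(\Ge)$, with seminorms controlled by Schwartz seminorms of $F$ and $G$. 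Thus everything comes down to $A_1\star B_1$, for which the Euclidean Fourier transform of the convolution is an absolutely convergent oscillatory integral over the \emph{non-central} variables, with a phase built from the multiplication law $(x,t)(y,s)=(x\circ y,\,t+s+r(x,y))$ and with $\widehat{A_1}$, $\widehat{B_1}$ appearing in the amplitude.

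The key structural point — which I would verify by inspecting that oscillatory integral — is that $\lbd$ enters it in only two ways: (i) as the frozen second argument of \emph{both} symbols, which are evaluated at central frequency $\lbd$ (the same $\lbd$ as in the output); and (ii) \emph{linearly} in the phase, multiplied by the central component $r$ of the group law (and by the central part of the inversion map), which are polynomials in the non-central variables alone, vanish at the origin, and carry precisely the homogeneity that makes their products with $\lbd$ scale-invariant. Consequently, estimating $D_\eta^{\alpha}\,\widehat{A\star B}(\eta,\lbd)$ requires differentiating only the phase and the symbols in the $\eta$-directions and then integrating by parts only in the non-central variables; this produces only non-central derivatives $D_\eta^{\beta}\widehat{A_1}$ and $D_\eta^{\gamma}\widehat{B_1}$ — exactly the quantities controlled by the $S_0^m$- and $S_0^n$-seminorms — while the $\lbd$-dependent factors arising from the phase remain polynomial in the non-central variables and, after the usual homogeneous rescaling, are absorbed into powers of $\rho(\eta,\lbd)$. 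In this way the estimate (\ref{symbolik_estimates}) for $A\star B$ with exponent $m+n$ is reached by exactly the bookkeeping that yields (\ref{symbolic_estimates}) in \cite{glowacki}, and continuity of the bilinear map is obtained the same way, the composition formula bounding each $S_0^{m+n}$-seminorm of $A\star B$ by a finite sum of products of $S_0^m$-seminorms of $A$ with $S_0^n$-seminorms of $B$. (Alternatively, continuity follows from the closed graph theorem, using that $S_0^k(\Ge)$ embeds continuously into $\Schw'(\Ge)$ for every $k$ and that $\op(A\star B)=\op(A)\op(B)$ pins down the limit.)

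The step I expect to cost the most care is the verification of (ii): one must go through the integration-by-parts argument of \cite{glowacki} and check that the central variable genuinely never has to be differentiated. This is morally forced by the decomposition (\ref{central}) — the central coordinate only adds up, modulo a correction depending on the non-central coordinates — and by $r(x,y)$ taking values in $\Te$; but turning that heuristic into the precise statement that the calculus of \cite{glowacki} restricts cleanly to the classes $S_0^m(\Ge)$ is the real content of the proof.
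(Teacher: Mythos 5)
Your key structural observation --- that, thanks to the splitting (\ref{central}) and formula (\ref{lambda}), the central frequency $\lbd$ enters the convolution only as a frozen parameter (in the two symbols, and linearly in the phase through $r(x,y)$), so that no $\lbd$-derivatives of $a$ or $b$ are ever taken --- is precisely what lets the paper dispose of the proposition with a one-line appeal to \cite{glowacki}, and on this point your proposal agrees with the paper's intent.

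The concrete reduction you propose, however, does not survive the passage from $S^m(\Ge)$ to $S_0^m(\Ges)$. The decomposition (\ref{split}), $A=A_1+F$ with $A_1$ compactly supported and $F\in\Schw(\Ge)$, is established in the paper only for $A\in S^m(\Ge)$: there $\widehat A$ is smooth in all variables, and rapid decay of $A$ away from the origin is obtained by differentiating $\widehat A$ in \emph{every} direction, including $\lbd$. For $A\in S_0^m(\Ges)$ the estimate (\ref{symbolik_estimates}) controls only the $\eta$-derivatives of $\widehat A$; no regularity in $\lbd$ is assumed, and correspondingly $A$ need not decay in the central direction at all. A concrete obstruction, already in the Abelian rank-two case $\Ge=\R\times\R$ with $p_1=1$, $p_2=2$: take $\widehat A(\eta,\lbd)=|\sin\lbd|$. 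This is bounded, independent of $\eta$, and all its $\eta$-derivatives vanish, so $A\in S_0^0(\Ges)$; yet $A$ is a lattice comb of Dirac masses spread along the entire central axis, so $(1-\phi)A$ is nowhere near Schwartz. Hence you cannot reduce to compactly supported $A_1,B_1$, and the absolutely convergent oscillatory integral you set up for $A_1\star B_1$ is not the object you actually have to estimate. The intended argument works directly on $a(\cdot,\lbd)\#_{\lbd}b(\cdot,\lbd)$ for each fixed $\lbd$, running the integration-by-parts and rescaling bookkeeping of \cite{glowacki} on the quotient variables alone, with the $\lbd$-dependent weight $\rho(\eta,\lbd)$ and uniformity in $\lbd$; your item (ii) correctly identifies that this bookkeeping never differentiates in $\lbd$, and that is the real content. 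So: drop the appeal to (\ref{split}), and the remainder of your sketch is the right plan.
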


Let us introduce the following notation:
$$
\widehat{f}\#\widehat{g}(\xi)=\widehat{f\star g}(\xi),
\qquad
\xi\in\Ges,
$$
for $f,g\in\Schw(\Ge)$. Then, for every fixed $\lbd\in\Te^{\star}$,
\begin{equation}\label{lambda}
a\#b(\eta,\lbd)=a(\cdot,\lbd)\#_{\lbd}b(\cdot,\lbd)(\eta),
\end{equation}
where 
$$
\widehat{f}\#_{\lbd}\widehat{g}(\eta)=(f\star_{\lbd}g)\ \widehat{}\ (\eta),
\hspace{2em}
f\star_{\lbd}g(x)=\int_{\Ge_0}f(x\circ y^{-1})g(y)\,e^{i\langle r(x,y^{-1}),\lbd\rangle}dy
$$
for $f,g\in\Schw(\Ge_0)$. In particular, $f\star_0g$ is the usual convolution on the quotient group~$\Ge_0$.

Let
$$
T_{k_j}F(x)=ix_{k_j}F(x),
\qquad
T_{\alpha}F(x)=(ix)^{\alpha}F(x).
$$
For  a multiindex $\gamma$, let 
$$
k(\gamma)=\max_{1\le k\le d}\{k:\gamma_k\neq0\},
\qquad
\gamma\neq0,
$$
and $k(0)=0$.
\begin{lemma}\label{commutator}
Let $f,g\in\Schw(\Ge)$. Then for every $\gamma$,
\begin{equation}\label{leibniz}
T_{\gamma}(f\star g)=T_{\gamma}f\star g+f\star T_{\gamma}g
+\sum_{\alpha,\beta}
c_{\alpha\beta}^{\gamma}T_{\alpha}f\star T_{\beta}g
\end{equation}
or, equivalently, by applying the Fourier transform, 
\begin{equation}\label{Leibniz}
D^{\gamma}(f\#g)=D^{\gamma}f\#g+f\# D^{\gamma}g
+\sum_{\alpha,\beta}
c_{\alpha\beta}^{\gamma}\,D^{\alpha}f\# D^{\beta}g,
\end{equation}
where the summation extends over 
\begin{equation}\label{summa}
k(\alpha),k(\beta)\le k(\gamma), \ |\alpha|+|\beta|=|\gamma|, \  \alpha\neq0, \beta\neq0.
\end{equation}
\end{lemma}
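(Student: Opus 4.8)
The plan is to reduce the statement to the polynomial nature of the group law in the exponential coordinates, together with its homogeneity under the dilations; we may assume $\gamma\neq0$. First I would recall that the product map on $\Ge$ is polynomial, so for each $k$ and $j$ the function $(y,z)\mapsto(yz)_{k_j}$ is a polynomial in the coordinates of $y$ and of $z$; since each $\delta_t$ is an automorphism acting on $\Ge_k$ by the scalar $t^{p_k}$, this polynomial is homogeneous of degree $p_k$ for the simultaneous dilation $(y,z)\mapsto(\delta_t y,\delta_t z)$. Put
$$
P_{k_j}(y,z)=(yz)_{k_j}-y_{k_j}-z_{k_j}.
$$
The identities $y\cdot0=y$ and $0\cdot z=z$ give $P_{k_j}(y,0)=P_{k_j}(0,z)=0$, so every monomial $y^{\mu}z^{\nu}$ that occurs in $P_{k_j}$ has $\mu\neq0$ and $\nu\neq0$; being also homogeneous of degree $p_k$, such a monomial is a product of at least two coordinates, each necessarily of degree strictly less than $p_k$, so that $|\mu|+|\nu|=p_k$ and $k(\mu),k(\nu)<k$.

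Next I would start from
$$
T_{\gamma}(f\star g)(x)=(ix)^{\gamma}\int_{\Ge}f(y)\,g(y^{-1}x)\,dy,\qquad f,g\in\Schw(\Ge),
$$
and, writing $z=y^{-1}x$ inside the integral, expand $(ix)^{\gamma}=\prod_{k,j}(ix_{k_j})^{\gamma_{k_j}}$ by means of $ix_{k_j}=i(yz)_{k_j}=iy_{k_j}+iz_{k_j}+iP_{k_j}(y,z)$. Multiplying out and collecting terms gives
$$
(ix)^{\gamma}=(iy)^{\gamma}+(iz)^{\gamma}+\sum_{\alpha,\beta}c_{\alpha\beta}^{\gamma}\,(iy)^{\alpha}(iz)^{\beta},
$$
where the sum runs exactly over the indices of \eqref{summa}: the conditions $|\alpha|+|\beta|=|\gamma|$ and $k(\alpha),k(\beta)\le k(\gamma)$ hold because the factor attached to the generator $(k,j)$ is homogeneous of degree $p_k$ and involves only layers $\le k\le k(\gamma)$ whenever $\gamma_{k_j}\neq0$, while $\alpha\neq0$ and $\beta\neq0$ because a monomial free of $y$ can only arise by choosing the term $iz_{k_j}$ in every factor — any monomial of a $P_{k_j}$ would supply a factor of $y$ — which forces $\beta=\gamma$, and symmetrically for a monomial free of $z$. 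Substituting this back into the integral and using $\int(iy)^{\alpha}(iz)^{\beta}f(y)g(z)\,dy=(T_{\alpha}f\star T_{\beta}g)(x)$ for $z=y^{-1}x$ proves \eqref{leibniz}; then \eqref{Leibniz} follows upon taking Fourier transforms, since $\widehat{T_{\alpha}h}=D^{\alpha}\widehat{h}$ and $\widehat{u\star v}=\widehat{u}\#\widehat{v}$.

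The step that takes genuine thought is the description of $P_{k_j}$ — that it has no monomial depending on $y$ alone or on $z$ alone, and that its monomials involve only layers strictly below $k$; once this is granted the rest is the bookkeeping of a multinomial expansion. One could instead argue by induction on the number of nonzero entries of $\gamma$: the base case is the single-factor version of the computation above, and the inductive step runs each summand of the induction hypothesis through that base case. There the one delicate point is to verify that the terms with $\alpha=0$ or $\beta=0$ reassemble into precisely $T_{\gamma}f\star g+f\star T_{\gamma}g$ — which the direct expansion makes automatic, and this is the route I would take.
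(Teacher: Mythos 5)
Your proof is correct, and it rests on exactly the same facts about the group law that the paper's sketch draws from (1.22) of Folland--Stein: the coordinates $(yz)_{k_j}$ are polynomials in $y,z$, homogeneous of degree $p_k$ for the simultaneous dilation, agreeing with $y_{k_j}+z_{k_j}$ to leading order, with correction $P_{k_j}$ depending only on layers strictly below $k$ and vanishing when either argument is $0$. The difference from the paper is purely organizational. The paper argues by induction on $\gamma$, writing $T_{\gamma}=T_{x_{k_j}}T_{\gamma'}$ and feeding the induction hypothesis through the single-coordinate identity (\ref{ind}); you instead expand $(ix)^{\gamma}=\prod_{k,j}\bigl(iy_{k_j}+iz_{k_j}+iP_{k_j}(y,z)\bigr)^{\gamma_{k_j}}$ with $z=y^{-1}x$ in one multinomial pass, reading off the conditions (\ref{summa}) from the degree count and the layer bookkeeping. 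Both routes are valid; yours trades the induction for a larger combinatorial display, but, as you note, it makes the isolation of the $\alpha=0$ and $\beta=0$ contributions as $f\star T_{\gamma}g$ and $T_{\gamma}f\star g$ completely mechanical (in the inductive version one has to check that these terms reassemble correctly after each step). Your verification that every monomial $y^{\mu}z^{\nu}$ of $P_{k_j}$ has $\mu\neq0$, $\nu\neq0$, $|\mu|+|\nu|=p_k$ and $k(\mu),k(\nu)<k$ is precisely the nucleus of the argument and is complete; the only cosmetic discrepancy is that the paper phrases the group law as $(xy^{-1})_k=x_k-y_k+P_k(x,y)$ rather than via $x=yz$, which is the same information in a different parametrization of the convolution integral.
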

\begin{proof}[Proof sketch.]
This is proved by the following induction. We pick a $\gamma$ and assume that (\ref{leibniz}) holds for all $\gamma'\neq\gamma$  such that $k(\gamma')\le k(\gamma)$ and $|\gamma'|\le|\gamma|$. By (1.22) of Folland-Stein \cite{folland}, the group law is expressed by
$$
(xy^{-1})_k=x_k-y_k+P_k(x,y),
\qquad
1\le k\le d,
$$
where the polynomials $P_k$ depend only on variables $x_j,y_j$ with $j<k$. Then
\begin{equation}\label{ind}
T_{x_{k_j}}(f\star g)=\sum_{\alpha,\beta}T_{\alpha}(f\star T_{\beta}g),
\end{equation}
where $k(\alpha),k(\beta)<k$ and $|\alpha|+|\beta|=p_k$. We let $T_{\gamma}=T_{x_{k_j}}T_{\gamma'}$ and apply the induction hypothesis combined with $(\ref{ind})$. Details are left to the reader. 
\end{proof}

\begin{lemma}\label{s0}
Let $A\in S^m(\Ge)$. If $B\in S_0^{-m}(\Ge)$ is the inverse of $A$, that is,
$$
A\star B=B\star A=\delta_0,
$$
then $B\in S^m(\Ge)$. 
\end{lemma}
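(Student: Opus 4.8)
The plan is to pass to the Fourier transform side, convert the two invertibility relations into a single functional equation for $b=\widehat B$ obtained by differentiating $a\#b=1$, and then bootstrap, using the composition calculus of the classes $S_0^{\bullet}(\Ge)$. Write $a=\widehat A\in S^m(\Ge)$ and $b=\widehat B\in S_0^{-m}(\Ge)$; the hypotheses then read $a\#b=b\#a=1$, where $1=\widehat{\delta_0}$. What I would actually prove, by induction on the homogeneous length $|\gamma|$, is that $D^\gamma b\in S_0^{-m-|\gamma|}(\Ge)$ for every multiindex $\gamma$. Reading off the estimate with $\alpha=0$ from each of these gives $|D^\gamma b(\xi)|\le C_\gamma\,\rho(\xi)^{-m-|\gamma|}$ for all $\gamma$ and all $\xi$; since this holds for all derivatives, a standard regularity argument (the bounds put $b$ in every local Sobolev space, hence make it smooth) then upgrades it to the statement that $b$ satisfies the estimates (\ref{symbolic_estimates}) with $m$ replaced by $-m$, i.e. $B\in S^{-m}(\Ge)$, which is the assertion of the lemma.

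First I would derive the recursion. Applying $D^\gamma$ to $a\#b=1$ and using the Leibniz rule (\ref{Leibniz}) — extended from $\Schw(\Ge)$ to the symbol classes involved — one obtains, for $\gamma\neq0$,
$$
a\#D^\gamma b=-D^\gamma a\#b-\sum_{\alpha,\beta}c_{\alpha\beta}^{\gamma}\,D^\alpha a\#D^\beta b,
$$
the sum extending over the indices in (\ref{summa}); in particular $|\alpha|+|\beta|=|\gamma|$ and $\alpha,\beta\neq0$, so that $|\beta|<|\gamma|$. Forming the $\#$-product of both sides with $b$ on the left, and using $b\#a=1$, the associativity of $\#$ and the relation $1\#c=c$, this becomes
$$
D^\gamma b=-b\#D^\gamma a\#b-\sum_{\alpha,\beta}c_{\alpha\beta}^{\gamma}\,b\#D^\alpha a\#D^\beta b.
$$
This identity is what drives the induction.

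The induction itself is essentially formal. For $\gamma=0$ the assertion is the hypothesis $b\in S_0^{-m}(\Ge)$. For $\gamma\neq0$, assume $D^\beta b\in S_0^{-m-|\beta|}(\Ge)$ for all $\beta$ with $|\beta|<|\gamma|$. Since $a\in S^m(\Ge)$ is smooth, all its derivatives exist, and $D^\gamma a\in S^{m-|\gamma|}(\Ge)\subset S_0^{m-|\gamma|}(\Ge)$, and likewise $D^\alpha a\in S^{m-|\alpha|}(\Ge)\subset S_0^{m-|\alpha|}(\Ge)$ for the indices occurring in the sum. Feeding these facts, the hypothesis on $b$, and the inductive hypothesis on $D^\beta b$ into Proposition \ref{calculus_omega}, one sees that each summand on the right-hand side of the recursion lies in $S_0^{-m-|\gamma|}(\Ge)$; hence so does $D^\gamma b$. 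This closes the induction and, modulo the soft regularity step noted above, the proof.

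The one point that is not purely formal is the justification of these manipulations for $a$ and $b$ themselves, since the Leibniz rule (\ref{Leibniz}), the associativity of $\#$ and the relation $1\#c=c$ were recorded only for $f,g\in\Schw(\Ge)$, whereas here $a$ and $b$ are merely symbols. I would settle this by the splitting (\ref{split}), $A=A_1+F$ with $A_1$ compactly supported and $F\in\Schw(\Ge)$, approximating $A_1$ and $B$ by Schwartz functions and passing to the limit with the continuity statements of Propositions \ref{calculus} and \ref{calculus_omega}; this is routine and I would only indicate it. The real content of the lemma — and the reason the argument is short — lies in the interplay of two facts: first, $a$ (unlike $b$ a priori) is assumed regular also in the central directions, so that each $D^\gamma a$ is an honest symbol; second, the classes $S_0^{\bullet}(\Ge)$, designed so as to require no estimates at all in the central variable $\lbd$, are nevertheless closed under the compositions occurring in the recursion. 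It is exactly this combination that forces on $b$ the estimates in the $\lbd$-directions that membership in $S_0^{-m}(\Ge)$ does not by itself supply.
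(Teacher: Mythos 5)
Your argument is correct and follows essentially the same route as the paper: differentiate $a\#b=1$ via the Leibniz formula (\ref{Leibniz}), solve for $D^\gamma b$ by left-composing with $b$ using $b\#a=1$, and bootstrap with Proposition \ref{calculus_omega}. The only organizational difference is that you run the induction over all multiindices $\gamma$, whereas the paper notes it suffices to induct over purely central $\gamma_d$ (since membership in $S_0^{-m}$ already controls all $\eta$-derivatives, giving $D^{\beta}b=D^{\beta'}_\eta D^{\beta_d}_\lambda b\in S_0^{-m-|\beta|}$ for free once the central derivatives are handled), but this is a minor economy and your version is equally valid.
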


\begin{proof}
Let $a=\widehat{A}$, $b=\widehat{B}$. By (\ref{Leibniz}),
$$
0=D^{\gamma_d}(a\#b)=D^{\gamma_d}a\#b+a\#D^{\gamma_d}b
+\sum c_{\alpha\beta}^{\gamma}D^{\alpha}a\#D^{\beta}b,
$$
where the summation extends over $\alpha,\beta$ such that
$$
|\alpha|+|\beta|=|\gamma_d|, \ |\alpha_d|,|\beta_d|<|\gamma_d|
$$
and every multiindex is split as $\alpha=(\alpha',\alpha_d)$, $\alpha_d$ being the part corresponding to $\Ges_d$. 
Therefore,
$$
D^{\gamma_d}b=-b\#D^{\gamma_d}a\#b
+\sum c_{\alpha\beta}^{\gamma}b\#D^{\alpha}a\#D^{\beta}b,
$$
where the sybol on the right-hand side belongs to $\widehat{S}_0^{-m-|\gamma_d|}$ provided that $b\in\widehat{S}_0^{-m-\kappa}$ for $|\kappa|<|\gamma_d|$. By induction, $D^{\gamma_d}b\in\widehat{S}_0^{-m-|\gamma_d|}(\Ge)$, which is our assertion. 
\end{proof}

Let $A_j\in S_0^{m_j}(\Ges)$, where $m_j\searrow-\infty$. Then there exists a distribution $A\in S_0^{m_1}(\Ges)$ such that
$$
A-\sum_{j=1}^NA_j\in S_0^{m_{N+1}}(\Ges)
$$
for every $N\in\N$. The distribution $A$ is unique modulo the class 
$$
S_0^{-\infty}(\Ges)=\bigcap_{n<0}S_0^n(\Ges).
$$
  We shall write
\begin{equation}\label{asymptotic}
A\approx\sum_{j=1}^{\infty}A_j,
\end{equation}
and call the distribution $A$ the asymptotic sum of the series $\sum A_j$ (cf., e.g., H\"ormander \cite{hormander}, Proposition 18.1.3).

We say that $A\in S^m(\Ge)$, where $m\ge0$, has a parametrix $B\in S^{-m}(\Ge)$ if
$$
B\star A-\delta_0\in \Schw(\Ge),
\qquad
A\star B-\delta_0\in \Schw(\Ge),
$$
where $\delta_0$ stands for the Dirac delta at $0$. If $B_1$ is a left-parametrix and $B_2$ a right one, then it is easy to see that $B_1=B_2$ modulo the Schwartz class functions so both $B_1$ and $B_2$ are parametrices. In particular, if $A$ is symmetric, then either of the conditions implies the other one.

\section{Sobolev spaces}

We  say that a tempered distribution $T$ is a {\it regular kernel of order $r\in\R$}, if it is homogeneous of degree ${}-Q-r$ and smooth away from the origin. A symmetric distribution $T$ is said to be {\it accretive}, if
$$
\langle T,f\rangle\ge0
$$
for real $f\in C_c^{\infty}(\Ge)$ which attain their maximal value at $0$. Such a $T$ is an infinitesimal generator of a continuous semigroup of subprobability measures $\mu_t$. By the Hunt theory (see, eg., Duflo \cite{duflo}), $\op(T)$ is a positive selfadjoint operator on $L^2(\Ge)$ with $\Schw(\Ge)$ as its core domain, and for every $0<m<1$,
$$
\op(T)^m=\op(T^m),
\qquad
\langle T^m,f\rangle=\frac{1}{\Gamma(-m)}\int_0^{\infty}t^{-1-m}\langle
\delta_0-\mu_t, f\rangle\,dt,
$$
where the distribution $T^m$ is also accretive.

Let $T$ be a fixed symmetric accretive regular kernel of order $0<m\le1$. Then there exists a symmetric nonnegative function $\Omega\in C^{\infty}(\Ge\setminus\{0\})$ which is homogeneous of degree $0$ such that
$$
\langle T,f\rangle
=cf(0)+\lim_{\e\to}\int_{|x|\ge\e}\Big(f(0)-f(x)\Big)\frac{\Omega(x)\,dx}{|x|^{Q+m}},
$$
where $c\ge0$. If $c=0$, $T$ is an infinitesimal generator of a continuous semigroup of probability measures with smooth densities. For every $0<a<1$, $T^a$ is also a symmetric regular kernel of order $am$. 

Let 
$$
\langle P,f\rangle
=\lim_{\e\to}\int_{|x|\ge\e}\frac{f(0)-f(x))\,dx}{|x|^{Q+1}}
$$
be a fixed symmetric accretive distribution of order $1$. Let us warn the reader that the distributions $P^m$ do not belong to any of the classes $S^m(\Ge)$ as they do not vanish rapidly at infinity which is a certain technical complication. That is why we introduce  the truncated kernels
$$
V_0=I,
\qquad
V_m=\phi P^m,
\qquad
m>0,
$$ 
where $\phi$ is a symmetric nonnegative $[0,1]$-valued smooth function with compact support and equal to $1$ on the unit ball. Thus defined $V_m\in S^m(\Ge)$ is also accretive and differs from $P^m$ by a finite measure. Therefore, for every $0<m\le 1$, there exist constants $C_1>0$ and $C_2>0$ such that
\begin{equation}\label{equi}
C_1\|(I+\op(P))^mf\|\le\|(I+\op(V_m))f\|\le C_2\|(I+\op(P))^mf\|,
\end{equation}
for $f\in\Schw(\Ge)$.

\begin{proposition}\label{ge}
For every $0<m\le1$, there exists a constant $C_m>0$ such that
$$
\|f\star V_m\|\ge C_m\|f\|,
\hspace{2em}
f\in\Schw(\Ge).
$$
\end{proposition}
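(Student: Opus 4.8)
The plan is to prove the coercivity estimate $\langle\op(V_m)f,f\rangle\ge C_m\|f\|^2$ for $f\in\Schw(\Ge)$ and then deduce the assertion from the Cauchy--Schwarz inequality. Since $V_m$ is symmetric we have $f\star V_m=\op(V_m)f$ and $\langle f\star V_m,f\rangle$ is real, so once the coercivity bound is available we get $C_m\|f\|^2\le\|f\star V_m\|\,\|f\|$, which is the desired inequality.

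To rewrite the quadratic form I would set $F(y)=\int_{\Ge}f(xy)\overline{f(x)}\,dx$ for $f\in\Schw(\Ge)$; then, by the definition of $\op$ and Fubini's theorem (admissible because $V_m$ is compactly supported and $F\in\Schw(\Ge)$), $\langle\op(V_m)f,f\rangle=\langle V_m,F\rangle$. Since $\overline{F(y^{-1})}=F(y)$, the imaginary part of $F$ is odd under inversion and hence annihilated by the symmetric distribution $V_m$, so $\langle\op(V_m)f,f\rangle=\langle V_m,G\rangle$ with $G=\mathrm{Re}\,F$. I would then record that $G$ is a real Schwartz function with $G(0)=\|f\|^2$ and $|G(y)|\le\|f\|^2$ (Cauchy--Schwarz); consequently $\phi G\in C_c^{\infty}(\Ge)$ and, because $\phi(0)=1$ and $0\le\phi\le1$, the function $\phi G$ attains its global maximum value $\|f\|^2$ at the origin.

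Now I would bring in the structure of $P^m$. For $0<m\le1$ the distribution $P^m$ is a symmetric accretive regular kernel of order $m$, and $\langle V_m,G\rangle=\langle\phi P^m,G\rangle=\langle P^m,\phi G\rangle$; applying to the test function $\phi G$ the integral representation of accretive regular kernels recalled above yields
\[
\langle\op(V_m)f,f\rangle=c\,\|f\|^2+\lim_{\e\to0}\int_{|x|\ge\e}\big(\|f\|^2-(\phi G)(x)\big)\,\Omega(x)\,|x|^{-Q-m}\,dx,
\]
with $c\ge0$ and $\Omega\ge0$. Choose $R$ so large that $\phi$ vanishes on $\{|x|\ge R\}$. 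The integrand is nonnegative since $(\phi G)(x)\le\|f\|^2$ everywhere, so for $0<\e<R$ the integral over $\{|x|\ge\e\}$ dominates the integral over $\{|x|\ge R\}$, on which the integrand equals $\|f\|^2\,\Omega(x)\,|x|^{-Q-m}$. Letting $\e\to0$ and discarding the nonnegative term $c\|f\|^2$,
\[
\langle\op(V_m)f,f\rangle\ \ge\ \|f\|^2\int_{|x|\ge R}\Omega(x)\,|x|^{-Q-m}\,dx\ =:\ C_m\,\|f\|^2 .
\]
The constant $C_m$ is finite (since $m>0$ and $\Omega$ is bounded) and strictly positive (since $\Omega$ is continuous, nonnegative, homogeneous of degree $0$, and not identically zero — otherwise $P^m$ would be a multiple of $\delta_0$, impossible for a kernel homogeneous of degree $-Q-m$ with $m>0$).

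I expect the only non-routine point to be conceptual: it is precisely the compact support of the cut-off $\phi$ — equivalently, the fixed positive mass that the measure $\Omega(x)\,|x|^{-Q-m}\,dx$ carries outside the support of $\phi$ — that makes $\op(V_m)$ coercive, whereas the untruncated operator $\op(P^m)=\op(P)^m$ has $0$ in its spectrum and admits no such lower bound. The remaining ingredients — the Fubini interchange, and the absolute convergence near the origin of the integral above (which holds because $\phi G$ has a maximum there) — are routine.
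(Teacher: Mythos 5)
Your proof is correct and follows essentially the same strategy as the paper's: write $\langle\op(V_m)f,f\rangle=\langle V_m,\widetilde{f}\star f\rangle$, pass via $V_m=\phi P^m$ to the accretive integral representation of $P^m$ on a test function maximized at $0$, and extract a positive lower bound from the mass of $\Omega(x)|x|^{-Q-m}\,dx$ in the region where $\phi$ vanishes. If anything, your version is a bit more careful than the printed proof, which implicitly treats $F=\widetilde f\star f$ as real and truncates the outer integral at $|x|\ge 1$ rather than outside $\operatorname{supp}\phi$; your reduction to $\mathrm{Re}\,F$ and your choice of radius $R$ repair both points.
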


\begin{proof}
In fact, let $f\in\Schw(\Ge)$ and $F=\widetilde{f}\star f$. Then
\begin{equation*}\begin{split}
\langle f\star V_m,f\rangle&=\langle T,F\rangle\\
&=\lim_{\e\to}
\int_{\e\le|x|\le1}\Big(F(0)-\phi(x)F(x)\Big)
\frac{\Omega_m(x)\,dx}{|x|^{Q+1}}+F(0)\int_{|x|\ge1}\frac{\Omega_m(x)\,dx}{|x|^{Q+1}}\\
&\ge C_m^2F(0)=C_m^2\|f\|^2
\end{split}\end{equation*}
since the first integral is nonnegative.
\end{proof}

It follows from (\ref{equi}) and Proposition \ref{ge} that there exist new constants  $C_1>0$ and $C_2>0$ such that
\begin{equation}\label{equi1}
C_1\|(I+\op(P))^mf\|\le\|\op(V_m)f\|\le C_2\|(I+\op(P))^mf\|,
\end{equation}
for $f\in\Schw(\Ge)$ and $0\le m\le1$.

Recall from \cite{glowacki1} that $P$ is {\it maximal}, that is, for every regular symmetric kernel $T$ of arbitrary order $m>0$ there exists a constant $C>0$ such that
\begin{equation}\label{max}
\|f\star\widetilde{T}\|\le C\|f\star P^mf\|,
\hspace{2em}
f\in\Schw(\Ge).
\end{equation}

We introduce a scale of Sobolev spaces. For every $m\in\R$,
$$
H(m)=\{f\in L^2(\Ge): (I+\op(P))^mf\in L^2(\Ge)\}
$$
with the usual norm $\|f\|_{(m)}=\|(I+\op(P))^mf\|_2$. The dual space to $H(m)$ can be identified with $H(-m)$. By (\ref{equi1}), the norms defined by $V_m$ for $0<m\le1$ are equivalent. It follows that for every $0\le m\le1$,
$$
V_m:H(m)\to H(0)
$$
is an isomorphism.

\section{Main step}

Here comes a preliminary version of our theorem.
\begin{proposition}\label{f}
Let $0\le m\le1$. Let  $A=A^{\star}\in S^m(\Ge)$ and let $\op(A):H(m)\to H(0)$ be an isomorphism. If $A\star V_m=V_m\star A$, then there exists $B\in S^{-m}(\Ge)$ such that
$$
A\star B=B\star A=\delta_0.
$$
In particular $\op(B)=\op(A)^{-1}$.
\end{proposition}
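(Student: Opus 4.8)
The plan is to reduce to the case $m=0$ and then bootstrap regularity using Lemma~\ref{s0}. Since $\op(V_m)\colon H(m)\to H(0)$ is an isomorphism (by the remarks following (\ref{equi1})) and $\op(A)\colon H(m)\to H(0)$ is assumed to be an isomorphism, the operator $\op(A)\op(V_m)^{-1}=\op(A\star W)$ is an isomorphism of $H(0)=L^2(\Ge)$, where $W\in S^{-m}(\Ge)$ is (a parametrix-level substitute for) the inverse kernel of $V_m$; one first must produce such a $W$. Here one should invoke that $V_m$ is accretive and invertible on the relevant Sobolev scale, so its $L^2$-inverse has a kernel in $S^{-m}(\Ge)$ — this is itself a small inversion statement, but it is easier than the general one because $V_m$ comes from a regular kernel and one may use Proposition~\ref{ge} together with the maximality (\ref{max}) to control $\op(V_m)^{-1}$; alternatively one defines $W$ directly by an asymptotic-sum parametrix for the elliptic operator $I+\op(V_m)$ and absorbs the smoothing remainder.

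Granting $W$, set $A_0=A\star W\in S^0(\Ge)$; the commutation hypothesis $A\star V_m=V_m\star A$ should give that $A_0$ is self-adjoint (or at least normal) as an operator on $L^2(\Ge)$, and $\op(A_0)$ is invertible on $L^2(\Ge)$. The key step is then: \emph{if $A_0\in S^0(\Ge)$ is self-adjoint and $\op(A_0)$ is invertible on $L^2(\Ge)$, then the kernel $B_0$ of $\op(A_0)^{-1}$ lies in $S_0^0(\Ge)$.} I would prove this by the standard Hörmander-type symbolic argument adapted to the group: write $a_0=\widehat{A_0}$, which is smooth in $\eta$; because $\op(A_0)$ is bounded below on $L^2$, one has $|a_0(\eta,\lambda)|\ge c>0$ for all $\xi=(\eta,\lambda)$ (using Proposition~\ref{cv} and the fact that $\op(a_0\#\overline{a_0})\ge c^2 I$, hence the multiplier $a_0\#\overline{a_0}$, being $\ge c^2$ in the operator sense fibrewise in $\lambda$, is bounded below pointwise after a limiting argument via the Plancherel/twisted-convolution representation (\ref{lambda})). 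Then define $b_0=\widehat{B_0}$ as the symbolic inverse: one constructs it by the Neumann-type series $b_0\approx \sum_k (c^{-2}\overline{a_0})\#\big(\delta_0 - c^{-2} a_0\#\overline{a_0}\big)^{\star k}$, or more cleanly by solving $a_0\#b_0=\delta_0$ order by order in $S_0^{-|\alpha|}$ using (\ref{Leibniz}): the leading term is $1/a_0$, and each commutator correction $D^\alpha a_0 \# D^\beta b_0$ with $|\alpha|+|\beta|=|\gamma|$, $\alpha,\beta\neq0$ drops the order, exactly as in the proof of Lemma~\ref{s0}. This produces $B_0'\in S_0^0(\Ge)$ with $A_0\star B_0'-\delta_0\in S_0^{-\infty}(\Ge)\subset\Schw(\Ge)$; composing with the true $L^2$-inverse $\op(A_0)^{-1}$ shows $B_0-B_0'\in\Schw(\Ge)$, whence $B_0\in S_0^0(\Ge)$.

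Finally, $B=W\star B_0\in S_0^{-m}(\Ge)$ by Proposition~\ref{calculus_omega}, and $\op(B)=\op(W)\op(A_0)^{-1}=\op(W)\op(W)^{-1}\op(A)^{-1}=\op(A)^{-1}$, so $A\star B=B\star A=\delta_0$. Since $B\in S_0^{-m}(\Ge)$ is a genuine two-sided inverse of $A\in S^m(\Ge)$, Lemma~\ref{s0} upgrades it to $B\in S^{-m}(\Ge)$, which is the assertion.

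I expect the main obstacle to be the pointwise lower bound $|a_0|\ge c>0$: passing from the operator inequality $\op(A_0)^*\op(A_0)\ge c^2 I$ on $L^2(\Ge)$ to a pointwise bound on the symbol is not automatic on a noncommutative group, and must be done fibrewise in the central variable $\lambda$ via the decomposition (\ref{lambda}) into twisted convolutions on $\Ge_0$, using that the twisted group $C^*$-algebra acts faithfully — together with a continuity/limiting argument in $\lambda$ to handle the exceptional fibres. A secondary difficulty is making the construction of $W$ (the $S^{-m}$ inverse of the elliptic kernel $V_m$) rigorous while only using results already available; the self-adjointness reduction via the commutation hypothesis is what makes this tractable, since it lets one stay within the better-behaved self-adjoint setting throughout.
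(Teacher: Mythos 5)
There is a genuine gap at the heart of your argument. The key step you propose --- deducing the pointwise lower bound $|a_0(\eta,\lambda)|\ge c>0$ from invertibility of $\op(A_0)$ on $L^2(\Ge)$ --- is simply false on a noncommutative group. The symbol $a_0=\widehat{A_0}$ is the \emph{Abelian} Fourier transform of the kernel, and it does not diagonalize group convolution; $\op(A_0)^{\star}\op(A_0)\ge c^2 I$ gives you $a_0\#\bar a_0\ge c^2$ in the operator (twisted-convolution) sense fibrewise in $\lambda$, but that does not descend to a pointwise bound on $a_0$ for $\lambda\neq0$, because $\#_\lambda$ on $\Ge_0$ is itself noncommutative. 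Your suggested fix (fibrewise, via faithfulness of the twisted group $C^*$-algebra) runs into exactly the same obstacle one level down. With no lower bound on $a_0$, the ``leading term $1/a_0$'' of your order-by-order construction does not exist, so there is no starting point for the Neumann/asymptotic series. Lemma~\ref{s0}, which you invoke as a precedent for the order-by-order step, presupposes that the two-sided inverse $B$ already lies in $S_0^{-m}$; it is a regularity bootstrap, not a construction.

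The paper sidesteps the pointwise-lower-bound issue entirely by a different organizing idea: induction on the depth of the group via the central decomposition $\Ge=\Ge_0\times\Te$. It shows (Lemma~\ref{c}) that the restricted symbol $a_0=a(\cdot,0)$ on the quotient $\Ge_0$ inherits the hypotheses, applies the inductive hypothesis there to get an inverse $b_0\in\widehat S^{-m}(\Ges_0)$, and splices this into a one-sided parametrix $p$ satisfying $p\#a=1-q$ with $q$ Schwartz, but only on the strip $|\lambda|\le2$. The identity $b=p+q\#b$ then yields symbol estimates for $b$ on that strip, and the scaling/equicontinuity argument of Corollary~\ref{e} (the dilated family $\op(B_n)$ is equicontinuous on $\Schw_1(\Ge)$, using Lemma~\ref{z} and the Leibniz-type commutator formula) transports those estimates to all $\lambda$. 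Only then does Lemma~\ref{s0} finish the job, exactly as in your last step. Note also that your preliminary reduction to $m=0$ via a kernel $W\in S^{-m}$ inverting $V_m$ is circular as stated: in the paper, invertibility of $V_m$ within $S^{-m}$ is a \emph{corollary} of Proposition~\ref{f} (via Corollary~\ref{fe}), not an independently available input. One could construct a parametrix for $V_m$ directly from heat-kernel estimates in the spirit of Proposition~\ref{decomposition}, but you would still be left with the unresolved central difficulty above.
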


By hypothesis, $A$ is invertible in $\mathcal{B}(L^2(\Ge))$. There exists a symmetric distribution $B$ such that
$$
\op(A)^{-1}f=f\star{B},
\qquad
f\in\Schw(\Ge).
$$
We have to show that $B\in S^{-m}(\Ge)$.

Let $\Schw_1(\Ge)$ denote the subspace of $\Schw(\Ge)$ consisting  of those functions whose Fourier transform is supported where $1\le|\lbd|\le2$. Note that this subspace is invariant under convolutions.

\begin{lemma}\label{z}
$\op(B)$ maps continuously $\mathcal{S}(\Ge)$ into $\mathcal{S}(\Ge)$. The same applies to the invariant space 
$\Schw_1(\Ge)$.
\end{lemma}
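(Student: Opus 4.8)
The plan is to squeeze everything out of the two facts we actually possess about $B$: that $\op(B)=\op(A)^{-1}$ is bounded on $L^2(\Ge)$, and that $A$ commutes with $V_m$ under $\star$. First I would record that $A\star B=B\star A=\delta_0$ as tempered distributions: by (\ref{split}) each of these convolutions makes sense (one factor is compactly supported plus Schwartz), and $\op(A\star B)=\op(A)\op(B)=I$ forces $A\star B=\delta_0$ since a convolution operator equal to the identity has kernel $\delta_0$; similarly on the other side. Then, using $A\star V_m=V_m\star A$ and the associativity of $\star$,
$$
V_m\star B=(B\star A)\star V_m\star B=B\star(A\star V_m)\star B=B\star(V_m\star A)\star B=B\star V_m\star(A\star B)=B\star V_m ,
$$
so $B$ commutes with $V_m$, hence with every convolution power $V_m^{\star k}\in S^{km}(\Ge)$ by Proposition~\ref{calculus}; equivalently $\op(B)\op(V_m)^k=\op(V_m)^k\op(B)$ on $\Schw(\Ge)$, where $\op(V_m)^k=\op(V_m^{\star k})$.

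Next I would establish the $P$-Sobolev regularity of $g=\op(B)f$ for $f\in\Schw(\Ge)$. Since $\op(V_m)^kf\in\Schw(\Ge)\subset L^2(\Ge)$, the commutation gives $\op(V_m)^kg=\op(B)\op(V_m)^kf\in L^2(\Ge)$, and iterating the norm equivalence (\ref{equi1}) together with the maximality (\ref{max}) of $P$ and the mapping properties of the calculus one obtains
$$
\|g\|_{(km)}\le C_k\big(\|\op(V_m)^kg\|+\|g\|\big)=C_k\big(\|\op(B)\op(V_m)^kf\|+\|\op(B)f\|\big)\le C_k'\big(\|f\|_{(km)}+\|f\|\big).
$$
Hence $g\in H(km)$ for every $k$, i.e. $g$ belongs to every Sobolev space $H(s)$, $s\ge0$; by density (and interpolation) the same inequality shows that $\op(B)$ maps each $H(s)$, $s\ge0$, boundedly into itself, and therefore maps $\bigcap_{s\ge0}H(s)$ into itself.

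The rapid decay of $g$ is produced by Lemma~\ref{commutator}. Applying (\ref{leibniz}) to $u\star\widetilde A$ and solving for $\op(A)(T_\gamma u)$ yields, for each $\gamma$, an operator identity on $\Schw(\Ge)$
$$
\op(A)\,T_\gamma=T_\gamma\,\op(A)-\op(C_\gamma)-\sum_{\substack{\alpha,\beta\neq0\\ |\alpha|+|\beta|=|\gamma|}}c_{\alpha\beta}^\gamma\,\op(C_\beta)\,T_\alpha ,
$$
where $\widehat{C_\gamma}=\pm D^\gamma\widehat A$ and $\widehat{C_\beta}=\pm D^\beta\widehat A$; since $A\in S^m(\Ge)$ with $m\le1$ and every nonzero multiindex has homogeneous length $\ge p_1=1\ge m$, the symbols $C_\gamma,C_\beta$ lie in $S^{m-|\gamma|},S^{m-|\beta|}\subseteq S^0(\Ge)$ for $\gamma,\beta\neq0$, so $\op(C_\gamma),\op(C_\beta)$ are bounded on every $H(s)$ by Proposition~\ref{cv} and the calculus. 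Applying the identity with $u=g$, using $\op(A)g=f$ and then $\op(B)=\op(A)^{-1}$, gives
$$
T_\gamma g=\op(B)\Big(T_\gamma f-\op(C_\gamma)g-\sum c_{\alpha\beta}^\gamma\,\op(C_\beta)\,T_\alpha g\Big),\qquad |\alpha|<|\gamma| ,
$$
and now I would induct on the homogeneous length $|\gamma|$ to prove $T_\gamma g\in\bigcap_{s\ge0}H(s)$: the case $\gamma=0$ is the previous step; for the inductive step $T_\gamma f\in\Schw(\Ge)$, the terms $\op(C_\gamma)g$ and $\op(C_\beta)T_\alpha g$ lie in $\bigcap_{s\ge0}H(s)$ by the $S^0$-boundedness and the inductive hypothesis $T_\alpha g\in\bigcap_{s\ge0}H(s)$ for $|\alpha|<|\gamma|$, and $\op(B)$ preserves $\bigcap_{s\ge0}H(s)$ by the previous step. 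Recalling that $h\in\Schw(\Ge)$ precisely when $T_\gamma h\in H(s)$ for all $\gamma$ and all $s\ge0$, we conclude $g\in\Schw(\Ge)$, and since all the estimates are quantitative, $f\mapsto g$ is continuous on $\Schw(\Ge)$. For $\Schw_1(\Ge)$, note that by (\ref{lambda}) the product $\#$ is fibred over the central variable, $a\#b(\eta,\lbd)=a(\cdot,\lbd)\#_\lbd b(\cdot,\lbd)(\eta)$, so the $\lbd$-support of $\widehat{f\star B}=\widehat f\#\widehat B$ is contained in that of $\widehat f$; hence $\op(B)$ carries $\Schw_1(\Ge)$ continuously into itself.

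The step I expect to be the real obstacle is the middle one: converting the bare $L^2$-boundedness of $\op(B)$ and the single commutation $B\star V_m=V_m\star B$ into control of $\op(B)$ on the whole $P$-Sobolev scale, and then, in the last step, keeping accurate track of the orders of the correction operators furnished by Lemma~\ref{commutator} so that the induction on $|\gamma|$ closes — the decisive point being that for $\gamma\neq0$ those operators have order $\le0$. A secondary but genuine point that needs a careful word is the \emph{a priori} meaningfulness and associativity of the convolutions used in the first step, since at that stage $B$ is only known to be a tempered distribution with bounded Fourier transform.
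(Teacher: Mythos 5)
Your inductive engine — rearranging the Leibniz identity (\ref{leibniz}) into a recursion for $T_\gamma g$ and inducting on the homogeneous length $|\gamma|$ — is exactly the paper's recursion (\ref{induction}), written at the level of a fixed $f$ rather than at the operator level. The crucial order count, namely that $D^\beta a$ for $\beta\neq0$ has order $m-|\beta|\le 0$ so that the correction operators are controlled via Propositions \ref{calculus} and \ref{cv}, is also the paper's, and you have it right. The $\Schw_1(\Ge)$ part via (\ref{lambda}) matches the paper's remark that $\Schw_1(\Ge)$ is convolution-invariant.

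The genuine gap is in your middle step, and you flagged the right place. You try to produce smoothness of $g=\op(B)f$ by deriving $B\star V_m=V_m\star B$ and then bootstrapping $\op(B)$ up the Sobolev scale to conclude that $\op(B)$ is bounded on every $H(s)$. The algebraic commutation is fine, but the inequality $\|g\|_{(km)}\lesssim\|\op(V_m)^k g\|+\|g\|$ is not available at this stage of the paper. What is known is (\ref{equi1}) and (\ref{max}) for a single power in the range $0<m\le1$, and that $\op(V_m):H(m)\to H(0)$ is an isomorphism; it is not known that $\op(V_m):H(s+m)\to H(s)$ for general $s$, nor that $(I+\op(P))^m$ and $\op(V_m)$ commute, nor that $S^0$-operators act boundedly on every $H(s)$. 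In the paper's logical order these are consequences of Proposition \ref{f} and its corollaries, so invoking them inside the proof of Lemma \ref{z} is circular. The same issue affects the assertion that $\op(C_\gamma)g$ and $\op(C_\beta)T_\alpha g$ lie in $\bigcap_s H(s)$.

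The paper sidesteps all of this with a one-line device you did not use: since $\op(B)$ is a bounded right-convolution operator, it commutes with every right-invariant vector field $Y$, so $Y^\alpha\op(B)f=\op(B)Y^\alpha f\in L^2$ for all $\alpha$, giving $\op(B)f\in C^\infty(\Ge)\cap L^2(\Ge)$ with no appeal to the hypothesis $A\star V_m=V_m\star A$ and no Sobolev scale. With smoothness in hand, the induction only has to produce $L^2$-bounds for $T_\gamma\op(B)$, and combined with the $Y$-commutation one controls $T_\gamma Y^\alpha g$ in $L^2$ for all $\gamma,\alpha$, which characterizes $\Schw(\Ge)$. Replacing your $H(s)$-boundedness step by this observation lets the rest of your argument go through as written.
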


\begin{proof}[Proof sketch.]
Being a convolution operator bounded on $L^2(\Ge)$, $\op(B)$ commutes with right-invariant vector fields $Y$, hence it maps $\Schw(\Ge)$ into $L^2(\Ge)\cap C^{\infty}(\Ge)$. Therefore, it is sufficient to show that for every $\gamma$, $T_{\gamma}\op(B)$ is bounded in $L^2$-norm. By Lemma \ref{commutator},
\begin{equation}\label{induction}
 \begin{split}
T_{\gamma}\op(B)&=\op(B)T_{\gamma}+\op(B)[T_{\gamma},\op(A)]\op(B)\\
&=\op(B)T_{\gamma}+\op(B)\op(A_{\gamma})\op(B)\\
&+\sum_{\alpha,\beta}
c_{\alpha\beta}\cdot\op(B)\op(A_{\alpha})T_{\beta}\op(B),
 \end{split}
\end{equation}
where the summation is taken over $\alpha$, $\beta$ as in (\ref{summa}).
The proof is completed by induction very similar to that of the proof of Lemma \ref{commutator}.
\end{proof}

For $n\in\Z$, let
$$
\langle A_n,f\rangle=2^{-nm}\int_{\Ge}f(2^nx)\,A(dx),
\qquad
\langle B_n,f\rangle=2^{nm}\int_{\Ge}f(2^nx)\,B(dx).
$$

\begin{corollary}\label{e}
The operators $\op(B_n)$ are equicontinuous on $\Schw_1(\Ge)$.
\end{corollary}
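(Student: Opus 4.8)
The plan is to run, for the dilated kernels, the induction used in the proof of Lemma~\ref{z}, checking that every constant that appears can be chosen independent of $n$. Write $D_tg=g\circ\delta_t$, so that $\langle A_n,f\rangle=2^{-nm}\langle A,D_{2^n}f\rangle$ and similarly for $B_n$. Since $\delta_{2^n}$ is an automorphism of $\Ge$, one has $A_n\star B_n=B_n\star A_n=\delta_0$, hence $\op(B_n)=\op(A_n)^{-1}$, and a direct computation yields
\[
\op(A_n)=2^{-nm}\,D_{2^{-n}}\,\op(A)\,D_{2^n},
\qquad
\op(B_n)=2^{nm}\,D_{2^{-n}}\,\op(B)\,D_{2^n}.
\]
Observe that $\|D_{2^n}g\|_2=2^{-nQ/2}\|g\|_2$, that $D_{2^n}$ carries $\Schw_1(\Ge)$ into the set of Schwartz functions whose Fourier transform is supported where $2^{np_d}\le|\lbd|\le 2^{np_d+1}$, and that convolution preserves the $\lbd$-support, by (\ref{lambda}). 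As in the proof of Lemma~\ref{z}, and since $\op(B_n)$ commutes with right-invariant vector fields, it suffices to produce, for each $\gamma$, a constant $C_\gamma$ independent of $n$ with
\[
\|T_\gamma\op(B_n)f\|_2\le C_\gamma\sum_{|\gamma'|\le|\gamma|}\|T_{\gamma'}f\|_2,
\qquad f\in\Schw_1(\Ge),
\]
and this I would prove by induction on the homogeneous length $|\gamma|$, starting from the identity $T_\gamma\op(B_n)=\op(B_n)T_\gamma-\op(B_n)[T_\gamma,\op(A_n)]\op(B_n)$ and Lemma~\ref{commutator} applied to $A_n$.

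The base case $\gamma=0$ is the uniform $L^2$-boundedness of $\op(B_n)$ on $\Schw_1(\Ge)$, and this is where the hypotheses of Proposition~\ref{f} enter. For $n\le 0$ the conjugation formula gives at once $\|\op(B_n)f\|_2\le 2^{nm}\|\op(B)\|\,\|f\|_2\le\|\op(B)\|\,\|f\|_2$. For $n\ge 0$, let $f\in\Schw_1(\Ge)$ and set $h=D_{2^n}f$; then $h$, and hence also $\op(B)h=\op(A)^{-1}h$, has Fourier transform supported where $|\lbd|\sim 2^{np_d}$. On functions of this type $\op(P)$ is bounded below by $c\,2^n$ — this is the dilation-rescaled form of the lower bound $\op(P)\ge c>0$ on $\Schw_1(\Ge)$, a hypoellipticity property of $P$ (cf.\ \cite{folland}) — so that $(I+\op(P))^m\ge (c\,2^n)^m$ there and $\|g\|_{(m)}\ge c^m2^{nm}\|g\|_2$ for every such $g$. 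Since $\op(A)^{-1}\colon H(0)\to H(m)$ is bounded, $\|\op(B)h\|_{(m)}\le C\|h\|_2$, whence $\|\op(B)h\|_2\le c^{-m}2^{-nm}\|\op(B)h\|_{(m)}\le Cc^{-m}2^{-nm}\|h\|_2$. Inserting this into the conjugation formula, the powers $2^{\pm nm}$ and $2^{\pm nQ/2}$ cancel and $\|\op(B_n)f\|_2\le Cc^{-m}\|f\|_2$.

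For the inductive step, Lemma~\ref{commutator} expresses $[T_\gamma,\op(A_n)]$ as a finite sum of operators $\op(T_\delta A_n)$ with $|\delta|=|\gamma|$ and $\op(T_{\delta'}A_n)\,T_{\delta''}$ with $\delta',\delta''\ne0$, $|\delta'|+|\delta''|=|\gamma|$. Now $\widehat{T_\delta A_n}$ equals, up to a combinatorial constant, $2^{n(|\delta|-m)}\,(\widehat{T_\delta A})\circ\delta_{2^n}$, and $T_\delta A\in S^{m-|\delta|}(\Ge)$ with $m-|\delta|\le0$. The key point is that on the band $1\le|\lbd|\le2$ — the only part of a convolution kernel that an operator acting on $\Schw_1(\Ge)$ feels — the symbol $\widehat{T_\delta A_n}$ obeys the $S^{m-|\delta|}$-estimates uniformly in $n$: there $|\xi|\ge c>0$, so $\rho(\delta_{2^n}\xi)\sim 2^n\rho(\xi)$, and the prefactor $2^{n(|\delta|-m)}$ exactly cancels the resulting power $\bigl(2^n\rho(\xi)\bigr)^{m-|\delta|}$, the estimates for the $\xi$-derivatives being handled the same way (and the range $n\le0$ only makes them smaller). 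On $\Schw_1(\Ge)$ the operator $\op(T_\delta A_n)$ therefore involves only the slices at $1\le|\lbd|\le2$, where $\widehat{T_\delta A_n}(\cdot,\lbd)$ satisfies $S_0^0$-estimates uniformly in $n$ and $\lbd$; hence, by Proposition~\ref{cv} applied slicewise, $\op(T_\delta A_n)$ and $\op(T_{\delta'}A_n)$ are bounded on $L^2$ uniformly in $n$ on $\Schw_1(\Ge)$. Since $T_\gamma$, these operators, and $\op(B_n)$ all map $\Schw_1(\Ge)$ into itself, combining the displayed identity with the base case and with the inductive hypothesis applied to $T_{\delta''}\op(B_n)$ (where $|\delta''|<|\gamma|$) gives the required estimate for $T_\gamma\op(B_n)$.

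The main obstacle I expect is the base case: it rests on the transversal ellipticity of $P$ — that $\op(P)$ is bounded below on functions of bounded central frequency, which after dilation supplies the gain $2^{nm}$ compensating the growth of the conjugating factor — and it requires careful bookkeeping of the powers of $2^n$ in the commutator terms, in particular over the range $n\le 0$, where $\rho(\delta_{2^n}\xi)$ is no longer comparable to $2^n\rho(\xi)$.
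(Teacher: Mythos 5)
Your proposal follows the same overall strategy as the paper — cut off in the central frequency $\lbd$, establish a uniform $L^2$ bound on $\Schw_1(\Ge)$ (the base case), and propagate to higher Schwartz seminorms by the commutator induction (\ref{induction}). The paper's own treatment is very terse: it cuts off the dilated symbols with $\phi(\lbd)$, observes that $\{A_n'\}$ is bounded in $S^m(\Ge)$, and then asserts that this yields uniform boundedness of $\{\op(B_n')\}$, before invoking (\ref{induction}). You replace that assertion with an explicit conjugation computation $\op(B_n)=2^{nm}D_{2^{-n}}\op(B)D_{2^n}$ and derive the uniform bound from the isomorphism $\op(A)\colon H(m)\to H(0)$ together with a scaling of the Sobolev norm on the $\lbd$-localized subspace. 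Your inductive step is sound; your ``slicewise'' use of Proposition \ref{cv} would be cleaner if phrased, as the paper does, via the $\phi(\lbd)$-truncated symbols $\phi(\lbd)\widehat{T_\delta A_n}$, which are then genuinely uniformly bounded in $S^{m-|\delta|}(\Ge)$, but the estimates you verify are exactly the ones needed.

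The genuine gap is the one you flag yourself: the base case for $n\ge 0$ rests on the transversal lower bound $\|\op(P)f\|\ge c\|f\|$ for $f\in\Schw_1(\Ge)$, which you assert with only a vague pointer to \cite{folland}. This is not a triviality. It does not follow from Proposition \ref{ge} (that concerns the truncated kernel $V_m$, not $P^m$), nor from (\ref{equi1}) or (\ref{compare}) (which all carry the $+\|f\|$ term). What is really being used is a Rockland-type injectivity of $\pi(P)$ in the nontrivial representations $\pi$ with central character in $\{1\le|\lbd|\le 2\}$, together with some continuity/compactness in $\lbd$ to make the lower bound uniform on the band; this is tied to the maximality property (\ref{max}) from \cite{glowacki1} rather than to anything in Folland--Stein. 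Until this lower bound is proved (or a reference to where it is proved is given), the base case is incomplete — and it is precisely the step that the paper's own one-line justification (``By Proposition \ref{cv}, the mapping $A\to\op(B)$ is continuous'') also leaves unexplained. A minor further remark: for $m>0$ the family $\{A_n'\}$ is \emph{not} bounded in $S^m(\Ge)$ over all $n\in\Z$ (the zeroth seminorm blows up as $n\to-\infty$); the paper only uses $n\in\N$ at the point where the corollary is applied, and your $n\le 0$ estimates never rely on that boundedness, so your argument is consistent with this — but it is worth being explicit that the equicontinuity one actually needs, and the one your argument delivers without extra hypotheses, is over $n\ge0$.
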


\begin{proof}
Let $\phi\in C_c^{\infty}(\Ges_d)$ be equal to $1$ for $1\le|\lbd|\le2$. Then $\op(A_n)f=\op(A_n')f$, $\op(B_n)f=\op(B_n')f$ for $f\in\Schw_1(\Ge)$, where
\[
 \widehat{A_n'}(\eta,\lbd)=\widehat{A_n}(\eta,\lbd)\phi(\lbd),
\qquad
 \widehat{B_n'}(\eta,\lbd)=\widehat{B_n}(\eta,\lbd)\phi(\lbd).
\]

By Proposition \ref{cv}, the mapping
$$
S^m(\Ge)\ni A\to \op(B)\in\mathcal{B}(L^2(\Ge))
$$
is continuous. Since the family $\{A_n'\}$ is bounded in $S^m(\Ge)$ so is $\{\op(B_n')\}$ in $\mathcal{B}(L^2(\Ge))$. Hence our assertion follows by induction using (\ref{induction}).
\end{proof}

Let $a=\widehat{A}$, and let
$$
\widehat{A_{\lbd}}(\eta)=a_{\lbd}(\eta)=a(\eta,\lbd),
\qquad
\lbd\in\T.
$$

\begin{lemma}\label{conti}
For every $f\in\Schw(\Ges_0)$ the function
$$
\lbd\to\|f\#_{\lbd}a_{\lbd}\|^2
$$
is continuous.
\end{lemma}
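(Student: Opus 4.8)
The plan is to transport the quantity to the quotient group $\Ge_0$ and reduce it to the continuity, in $\lbd$, of a twisted convolution. Let $g\in\Schw(\Ge_0)$ be the inverse Fourier transform of $f$, and let $A_{\lbd}$ denote the tempered distribution on $\Ge_0$ with $\widehat{A_{\lbd}}=a(\cdot,\lbd)$. Using the splitting (\ref{split}), $A=A_1+F$ with $A_1$ compactly supported and $F\in\Schw(\Ge)$, one has $A_{\lbd}=(A_1)_{\lbd}+F_{\lbd}$, where $F_{\lbd}\in\Schw(\Ge_0)$ has Fourier transform $\widehat F(\cdot,\lbd)$, and $(A_1)_{\lbd}$ is the partial Fourier transform of $A_1$ in the central variable, a compactly supported distribution on $\Ge_0$ with support in a fixed compact set $K$. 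In particular $g\star_{\lbd}A_{\lbd}$ is a well-defined Schwartz function on $\Ge_0$, being the sum of $g\star_{\lbd}F_{\lbd}\in\Schw(\Ge_0)$ and the twisted convolution of a Schwartz function with a compactly supported distribution; and by the definition of $\#_{\lbd}$ together with Plancherel on $\Ge_0$,
$$
\|f\#_{\lbd}a_{\lbd}\|^2=\bigl\|\widehat{g\star_{\lbd}A_{\lbd}}\bigr\|^2=\|g\star_{\lbd}A_{\lbd}\|_{L^2(\Ge_0)}^2 .
$$
Since $t\mapsto t^2$ is continuous, it suffices to prove that $\lbd\mapsto g\star_{\lbd}A_{\lbd}$ is continuous from $\T$ into $L^2(\Ge_0)$, which I would do by treating the two summands separately.

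For the Schwartz summand the argument is soft. As $\widehat F\in\Schw(\Ges)$, the map $\lbd\mapsto\widehat F(\cdot,\lbd)$ is smooth, hence continuous, into $\Schw(\Ges_0)$, so $\lbd\mapsto F_{\lbd}$ is continuous into $\Schw(\Ge_0)$. The twisted convolution $g\star_{\lbd}h(x)=\int_{\Ge_0}g(x\circ y^{-1})h(y)\,e^{i\langle r(x,y^{-1}),\lbd\rangle}\,dy$, whose only $\lbd$-dependence sits in the bounded smooth factor $e^{i\langle r,\lbd\rangle}$, maps $\Schw(\Ge_0)$ into itself continuously with bounds uniform for $\lbd$ in compact sets and depends continuously on $\lbd$ for fixed $h$ (dominated convergence in each seminorm). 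Composing, $\lbd\mapsto g\star_{\lbd}F_{\lbd}$ is continuous into $\Schw(\Ge_0)$, a fortiori into $L^2(\Ge_0)$.

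The substance is in the compactly supported summand. Write $g\star_{\lbd}(A_1)_{\lbd}(x)=\langle(A_1)_{\lbd},\Phi_{x,\lbd}\rangle$ with $\Phi_{x,\lbd}(y)=g(x\circ y^{-1})\,e^{i\langle r(x,y^{-1}),\lbd\rangle}$. Because $g$ is Schwartz while $y$ ranges over the compact $K$ and $r$ is polynomial, for $\lbd$ in any fixed compact set and all $N$, $k$ one obtains $\|\Phi_{x,\lbd}\|_{C^k(K)}\le C_N(1+|x|)^{-N}$ and, using $|e^{i\theta}-e^{i\theta'}|\le|\theta-\theta'|$, also $\|\Phi_{x,\lbd}-\Phi_{x,\lbd_0}\|_{C^k(K)}\le C_N|\lbd-\lbd_0|\,(1+|x|)^{-N}$. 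These yield that $T\mapsto g\star_{\lbd}T$ maps $\mathcal{E}'(K)$ into $L^2(\Ge_0)$ continuously, uniformly for $\lbd$ in a compact set, and that $\lbd\mapsto g\star_{\lbd}T$ is continuous into $L^2(\Ge_0)$ for each fixed $T\in\mathcal{E}'(K)$. Since $\lbd\mapsto(A_1)_{\lbd}$ is continuous — indeed smooth, by differentiating under the partial Fourier transform — into $\mathcal{E}'(K)$, the decomposition
$$
g\star_{\lbd}(A_1)_{\lbd}-g\star_{\lbd_0}(A_1)_{\lbd_0}
=g\star_{\lbd}\bigl[(A_1)_{\lbd}-(A_1)_{\lbd_0}\bigr]
+\bigl[g\star_{\lbd}-g\star_{\lbd_0}\bigr](A_1)_{\lbd_0}
$$
together with these two properties gives the continuity of $\lbd\mapsto g\star_{\lbd}(A_1)_{\lbd}$ into $L^2(\Ge_0)$. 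Adding the two summands and invoking Plancherel would complete the argument.

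I expect the main obstacle to be precisely this last part: one has to follow simultaneously the $\lbd$-dependence of the compactly supported distribution $(A_1)_{\lbd}$ and of the oscillatory factor in $\star_{\lbd}$, and check the uniform-in-$x$ decay of $\Phi_{x,\lbd}$ that makes the relevant maps land continuously in $L^2(\Ge_0)$. None of this is deep, but it is where the genuine estimates are; the rest is formal manipulation with the Fourier transform and Plancherel's theorem.
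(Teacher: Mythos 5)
Your proof is correct, but it takes a genuinely different (and considerably longer) route than the paper's. The paper's argument is a short trick entirely on the Fourier side: choose a strictly positive $h\in\Schw(\T)$ with $h(0)=1$, observe that $F=(f\otimes h)\#a$ lies in $\Schw(\Ges)$ (this is exactly the $\Schw$-preservation property of $\op(A)$ for $A\in S^m(\Ge)$, which itself comes from the splitting (\ref{split})), then use (\ref{lambda}) to identify
$$
\int_{\Ges_0}|F(\eta,\lbd)|^2\,d\eta=|h(\lbd)|^2\,\|f\#_{\lbd}a_{\lbd}\|^2,
$$
and divide by $|h(\lbd)|^2>0$. Continuity (indeed smoothness) of the left-hand side in $\lbd$ is immediate since $F$ is Schwartz. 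You instead undo the Fourier transform to work with twisted convolutions $g\star_{\lbd}A_{\lbd}$ on $\Ge_0$, invoke the splitting $A=A_1+F$ directly, and then carry out by hand the estimates on $\Phi_{x,\lbd}(y)=g(x\circ y^{-1})e^{i\langle r(x,y^{-1}),\lbd\rangle}$ and on the $\lbd$-dependence of the compactly supported summand $(A_1)_{\lbd}\in\mathcal{E}'(K)$. Both arguments rest on (\ref{split}); the difference is that the paper wraps that input into the already-established fact that $\op(A)$ maps $\Schw$ to $\Schw$ and then exploits the tensor trick, whereas you re-derive the needed continuity from scratch. The paper's version is shorter and slicker; yours is more elementary and self-contained, and exposes the actual decay estimates that make the map land in $L^2(\Ge_0)$ — at the cost of having to track the order of $(A_1)_{\lbd}$ and the uniformity in $\lbd$ explicitly, which the paper's proof sidesteps entirely.
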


\begin{proof}
Let $0<h\in\Schw(\T)$ and $h(0)=1$. Then $F=(f\otimes h)\#a\in\Schw(\Ges)$ and
$$
\lbd\to\int_{\Ges_0}|F(\eta,\lbd)|^2d\eta=|h(\lbd)|^2\|f\#_{\lbd}a_{\lbd}\|^2
$$
is continuous, which implies our claim.
\end{proof}

From now on we  proceed by induction. The assertion of Proposition \ref{f} is obviously true in the Abelian case. Let us assume that it holds for $\Ge_0=\Ge\slash\Te$. 

\begin{lemma}\label{c}
The distribution $A_0$ satisfies the hypothesis of  Proposition \ref{f} on $\Ge_0$.
\end{lemma}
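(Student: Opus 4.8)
Write $A_0$ for the distribution on $\Ge_0=\Ge\slash\Te$ whose symbol is the restriction $a_0=a(\cdot,0)$ of $a=\widehat A$ to $\lambda=0$; this is the object to which the inductive hypothesis is meant to apply. Two of the three requirements of Proposition~\ref{f} are immediate. Since $|D_\eta^\alpha a(\eta,0)|\le C_\alpha\rho(\eta,0)^{m-|\alpha|}$ and $\rho(\eta,0)$ is comparable to $1+|\eta|$ on $\Ges_0$, the symbol $a_0$ obeys the defining estimates of $S^m(\Ge_0)$, so $A_0\in S^m(\Ge_0)$. The hypothesis $A=A^\star$ is equivalent to $a$ being real-valued (because $\widehat{\widetilde A}=\overline{a}$), hence $a_0$ is real-valued, and therefore $A_0=A_0^\star$ on $\Ge_0$, the convolution there being the ordinary one $\star_0$. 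For the commutation relation I would specialise \eqref{lambda} to $\lambda=0$: from $a\#v_m=v_m\#a$ one reads off $a_0\#_0(v_m)_0=(v_m)_0\#_0 a_0$, that is, $A_0\star(V_m)_0=(V_m)_0\star A_0$, where $(V_m)_0$ is the distribution on $\Ge_0$ with symbol $v_m(\cdot,0)$. This $(V_m)_0$ inherits symmetry and accretivity from $V_m$ and, as noted at the end, is elliptic of order $m$ on $\Ge_0$, so it serves as the reference kernel for $\Ge_0$.

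The heart of the matter is the remaining requirement: $\op(A_0)\colon H(m)\to H(0)$ should be an isomorphism on $\Ge_0$. The plan is a localisation in the central variable $\lambda$. By hypothesis $\op(A)\colon H(m)\to H(0)$ is an isomorphism on $\Ge$, so by \eqref{equi1} there are $c,C>0$ with $c\|f\star V_m\|\le\|f\star A\|\le C\|f\star V_m\|$ for $f\in\Schw(\Ge)$. Plancherel in $\lambda\in\T$ together with \eqref{lambda} turns this into
\[ c^2\int_{\T}\|\widehat f_\lambda\#_\lambda(v_m)_\lambda\|^2\,d\lambda\le\int_{\T}\|\widehat f_\lambda\#_\lambda a_\lambda\|^2\,d\lambda\le C^2\int_{\T}\|\widehat f_\lambda\#_\lambda(v_m)_\lambda\|^2\,d\lambda \]
for every $\widehat f\in\Schw(\Ges)$. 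I would then insert $\widehat f=\psi\otimes h_j$ with $\psi\in\Schw(\Ges_0)$ fixed and $(h_j)$ an $L^2(\T)$-normalised sequence concentrating at $\lambda=0$; by Lemma~\ref{conti} (applied to $a$ and to $v_m$) the functions $\lambda\mapsto\|\psi\#_\lambda a_\lambda\|^2$ and $\lambda\mapsto\|\psi\#_\lambda(v_m)_\lambda\|^2$ are continuous, so letting $j\to\infty$ and cancelling $\|h_j\|^2$ yields $c^2\|\psi\#_0(v_m)_0\|^2\le\|\psi\#_0 a_0\|^2\le C^2\|\psi\#_0(v_m)_0\|^2$, that is, $c\|g\star(V_m)_0\|\le\|g\star A_0\|\le C\|g\star(V_m)_0\|$ for all $g\in\Schw(\Ge_0)$. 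Since $\|g\star(V_m)_0\|$ is equivalent to $\|g\|_{(m)}$ on $\Ge_0$, this is a two-sided bound $c'\|g\|_{(m)}\le\|\op(A_0)g\|\le C'\|g\|_{(m)}$. Finally, $\op(A_0)$ is self-adjoint on $L^2(\Ge_0)$ (by the symmetry above), so the lower bound forces $0$ out of its spectrum: it gives injectivity and closed range, while density of the range follows from $\ker\op(A_0)=\{0\}$. Hence $\op(A_0)\colon H(m)\to H(0)$ is an isomorphism, and the induction can proceed.

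The step I expect to be the main obstacle is precisely this localisation: one must know, via Lemma~\ref{conti}, that the $\lambda$-slices $\|\psi\#_\lambda a_\lambda\|^2$ depend continuously on $\lambda$ — so that concentrating the test function near $\lambda=0$ recovers the slice there — and one must check that the inequalities, valid for all $\widehat f$, survive the passage to the limit after normalisation. A subsidiary point, which is the reason for phrasing the commutation hypothesis with $(V_m)_0$ rather than with some a priori unrelated canonical kernel on $\Ge_0$, is the verification that $(V_m)_0$ is itself elliptic of order $m$; this follows from the corresponding property of $V_m$ by the same localisation argument, comparing $V_m$ on the Fourier side with a suitable extension to $\Ge$ of a fixed elliptic kernel on $\Ge_0$. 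Once that is granted, the equivalence of Sobolev norms on $\Ge_0$ and the self-adjointness upgrade are routine.
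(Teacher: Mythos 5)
Your argument is essentially the one in the paper: both reduce the isomorphism hypothesis to the one-sided estimate $\|f\star A\|\ge C\|f\star V_m\|$, pass the commutation and symbol estimates to the $\lambda=0$ slice, and then localise the norm estimate at $\lambda=0$ via Lemma~\ref{conti} (your Plancherel-in-$\lambda$ plus concentration is just an explicit form of that localisation). The only minor inefficiency is that you also derive the upper bound, which is superfluous because $A_0\in S^m(\Ge_0)$ already yields boundedness $H(m)\to H(0)$, and the lower bound together with $A_0=A_0^{\star}$ is what the paper uses to get the isomorphism.
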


\begin{proof}
Observe that under the remaining assumptions of Proposition \ref{f} the condition that $\op(A):H(m)\to H(0)$ is an isomorphism is equivalent to the estimate
$$
\|f\star A\|\ge C\|f\star V_m\|,
\qquad
f\in \Schw(\Ge).
$$

Now, since $A\star V_m=V_m\star A$, we also have
$$
A_0\star(V_m)_0=(V_m)_0\star A_0,
$$
where $(V_m)_0$ is the counterpart of $V_m$ on $\Ge_0$.
Furthermore, we have 
$$
\|f\star {A}\|\ge C\|f\star V_m\|
$$
so, by Lemma \ref{conti},
$$
\|f_0\star {A_0}\|\ge C\|f_0\star(V_m)_0\|,
\qquad
f\in\Schw(\Ge),
$$
which implies
$$
\|f\star {A_0}\|\ge C\|f\star(V_m)_0\|,
\qquad
f\in\Schw(\Ge_0).
$$
\end{proof}

Let $b=\widehat{B}$ and $b_n=\widehat{B_n}$. Of course, $b_n\in\Schw'(\Ges)$.

\begin{lemma}
There exist $p\in\widehat{S}_0^{-m}(\Ges)$ and $q\in\Schw(\Ges)$ such that
\begin{equation}\label{d}
p\#a(\eta,\lbd)=1-q(\eta,\lbd),
\qquad
|\lbd|\le2.
\end{equation}
\end{lemma}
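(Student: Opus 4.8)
The plan is to construct $p$ as a left parametrix for $a$, built fibrewise in $\lbd$ over the compact set $\{|\lbd|\le2\}$ out of the inverse already available on the quotient $\Ge_0=\Ge/\Te$. Recall from (\ref{lambda}) that $p\#a(\eta,\lbd)=p(\cdot,\lbd)\#_{\lbd}a(\cdot,\lbd)(\eta)$, so the $\#$-product does not mix different values of $\lbd$, while $\#_0$ is ordinary convolution on $\Ge_0$. By Lemma \ref{c} and the induction hypothesis, Proposition \ref{f} applies to $A_0$ on $\Ge_0$, so there is $B^0\in S^{-m}(\Ge_0)$ with $A_0\star B^0=B^0\star A_0=\delta_0$; put $b^0=\widehat{B^0}$, a symbol of order $-m$ on $\Ges_0$, so that $b^0\#_0a_0=1$ on $\Ges_0$.

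First I would transplant $b^0$ to $\Ges$: fix $\psi\in C_c^{\infty}(\T)$ with $\psi=1$ for $|\lbd|\le2$ and put $p_1(\eta,\lbd)=\psi(\lbd)\,b^0(\eta)$. Since $\rho(\eta,\lbd)$ and $\rho_0(\eta)$ are comparable on the support of $\psi$, one checks $p_1\in\widehat{S}_0^{-m}(\Ges)$. For $|\lbd|\le2$ one gets $p_1\#a(\eta,\lbd)=b^0\#_{\lbd}a_{\lbd}(\eta)=1-r_1(\eta,\lbd)$, where the error $r_1:=\psi-p_1\#a$ equals $b^0\#_0a_0-b^0\#_{\lbd}a_{\lbd}$ on $\{|\lbd|\le2\}$ and is supported where $|\lbd|$ is bounded. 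The crucial observation is that $\#_{\lbd}$ differs from $\#_0$ only through the factor $e^{i\langle r(x,y^{-1}),\lbd\rangle}$ in the definition of $\star_{\lbd}$, and the cocycle $r$ takes values in $\Te$, hence is homogeneous of degree $p_d$ (it vanishes when either argument does); equivalently, $\lbd$-differentiation lowers the order by $p_d$. Consequently, for $|\lbd|$ bounded, $b^0\#_{\lbd}a_{\lbd}$ differs from $b^0\#_0a_0=1$ by a symbol of order $-p_d$, so $r_1\in\widehat{S}_0^{-p_d}(\Ges)$ with $p_d>1$.

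Next I would iterate. By Proposition \ref{calculus_omega}, $r_1^{\#k}\in\widehat{S}_0^{-kp_d}(\Ges)$, with orders decreasing to $-\infty$, so the asymptotic sum $Q\approx\sum_{k\ge1}r_1^{\#k}$ exists in $\widehat{S}_0^{-p_d}(\Ges)$. Set $p:=(1+Q)\#p_1\in\widehat{S}_0^{-m}(\Ges)$. Because $\#$ is fibered in $\lbd$, the identity $p_1\#a=1-r_1$ on $\{|\lbd|\le2\}$ may be multiplied on the left by $1+Q$ there, giving, for $|\lbd|\le2$,
\[
p\#a=(1+Q)\#(1-r_1)=1-q,\qquad q=r_1+Q\#r_1-Q,
\]
and the telescoping identity $Q-Q\#r_1=r_1$ (valid modulo $\widehat{S}_0^{-\infty}$) shows $q\in\widehat{S}_0^{-\infty}(\Ges)$, still supported where $|\lbd|$ is bounded.

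It remains to upgrade $q\in\widehat{S}_0^{-\infty}(\Ges)$, which a priori encodes only rapid decay and smoothness in the variable $\eta$, to $q\in\Schw(\Ges)$. For this I would track the parameter $\lbd$ throughout: the symbols $a_{\lbd}$, $b^0$ and $\psi$, the product $\#_{\lbd}$, and the asymptotic summation all depend smoothly on $\lbd$ with values in the relevant (symbol or Fr\'echet) spaces, so $\lbd\mapsto q(\cdot,\lbd)\in\Schw(\Ges_0)$ is a smooth map supported in a compact subset of $\T$, and such a map defines an element of $\Schw(\Ges_0\times\T)=\Schw(\Ges)$. I expect the two delicate points to be precisely this propagation of joint $(\eta,\lbd)$-regularity through the construction, and the verification that the zeroth-order error $r_1$ genuinely gains the order $p_d$ coming from the cocycle; the remaining steps are the standard parametrix iteration.
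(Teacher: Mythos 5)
Your plan follows the same overall strategy as the paper's proof: transplant the quotient inverse $b^0$ from $\Ges_0$ to $\Ges$ via a cutoff, observe that the resulting error gains order for bounded $\lambda$, and then run the Neumann iteration in the class $\widehat{S}_0$. The differences are minor but worth noting. The paper first forms the conic cutoff $\psi(\eta,\lbd)=u\bigl(\rho(0,\lbd)/\rho(\eta,0)\bigr)\in\widehat{S}^0(\Ges)$, so that $p_0=\psi b_0$ lies in the \emph{full} class $\widehat{S}^{-m}(\Ges)$, and only afterward multiplies by $\phi(\lbd)\in C_c^\infty(\T)$ to land in $\widehat{S}_0^{-m}$; you skip the intermediate step and use $\psi(\lbd)$ alone, which is harmless for the statement since only $p\in\widehat{S}_0^{-m}$ is required, and the comparability $\rho(\eta,\lbd)\sim\rho_0(\eta)$ on $\operatorname{supp}\psi$ indeed gives $p_1\in\widehat{S}_0^{-m}$ directly. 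Your estimate that the first error $r_1$ is of order $-p_d$ (via the $\Te$-valued cocycle and the $\lbd$-derivative of $a$) is sharper than the paper's stated $-1$, but either suffices for the iteration. You are right to flag as delicate the upgrade from $q\in\widehat{S}_0^{-\infty}$ to $q\in\Schw(\Ges)$: this is exactly the step the paper passes over silently, and your proposed remedy (propagating joint $(\eta,\lbd)$-smoothness through the construction, noting that the $\lbd$-support is compact so Schwartz decay in $\lbd$ is automatic) is the argument one would have to carry out. In short, the proposal is correct in outline and takes essentially the paper's route, with a slightly more direct choice of cutoff and a more explicit accounting of where the technical work lies.
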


\begin{proof}
Let $u\in C_c^{\infty}([0,\infty)$ be equal to $1$ in a neighbourhood of $[0,1]$ and supported in $[0,2)$. Then
$$
\psi(\eta,\lbd)=u\Big(\frac{\rho(0,\lbd)}{\rho(\eta,0)}\Big)
$$
is an element of $\widehat{S}^{0}(\Ges)$. By Lemma \ref{c} and the induction hypothesis, there exists $b_0\in\widehat{S}^{-m}(\Ges)$ on $a$ such that
$$
b_0\#_0a_0=1.
$$
Let
$$
p(\eta,\lbd)=\psi(\eta,\lbd)b_0(\eta).
$$
Then $p\in\widehat{S}^{-m}(\Ges)$ and
\begin{equation*}
 \begin{split}
p\#a(\eta,\lbd)&=p\#(a-a_0)(\eta,\lbd)+b_0\#_0a_0(\eta)+(1-\psi)(\cdot,\lbd)b_0\#_0a_0(\eta)\\
&=1-q_0(\eta,\lbd),
\end{split}
\end{equation*}
where for every $\phi\in C_c^{\infty}(\T)$, $\phi(\lbd)q_0(\eta,\lbd)$ is in $\widehat{S}_0^{-1}(\Ges)$. Therefore we take $\phi\in C_c^{\infty}(\T)$ which equals $1$ where $|\lbd|\le2$ and modify $p_0$ and $q_0$ by letting
$$
p_1(\eta,\lbd)=p_0(\eta,\lbd)\phi(\lbd),
\qquad
q_1(\eta,\lbd)=q_0(\eta,\lbd)\phi(\lbd).
$$
Now, $p_1\in\widehat{S}_0^{-m}(\Ges)$, $q_1\in\widehat{S}_0^{-1}(\Ges)$, and
$$
p_1\#a=1-q_1,
\qquad
|\lbd|\le2.
$$

Let
$$
p\approx\sum_{k=1}^{\infty}q_1^k\#p_1,
$$
where the infinite sum is understood as in (\ref{asymptotic}). Then $p\in S_0^{-m}$ and
$$
p\#a=1-q,
\qquad
|\lbd|\le2,
$$
where $q\in\Schw(\Ges)$.
\end{proof}

\vspace{2ex}
Now we are in a position to conclude the proof of Proposition \ref{f}. By acting with $b$ on the right on both sides of (\ref{d}), we get
$$
b=p+q\#b,
\qquad|\lbd|\le2,
$$
where $q\#b\in\Schw(\Ge)$ (Lemma \ref{z}). Consequently,
$$
|D^{\alpha}_{\eta}b(\eta,\lbd)|\le C_{\alpha}\rho(\eta,\lbd)^{-m-|\alpha|},
\qquad
|\lbd|\le2.
$$
However, the same applies to $b_n$ for every $n\in\N$ with the same constants $C_{\alpha}$. Therefore, by (\ref{symbolik_estimates}), $B\in S^{-m}_0(\Ge)$.
Finally, by Lemma \ref{s0}, we conclude that $B\in S^{-m}(\Ge)$.

\bigskip
\begin{corollary}\label{fe}
Let  $A\in S^0(\Ge)$ and let 
$$
\|f\star A\|\ge C\|f\|,
\qquad
f\in\Schw(\Ge).
$$
There exists $B\in S^{0}(\Ge)$ such that
$$
B\star A=\delta_0.
$$
\end{corollary}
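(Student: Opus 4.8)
The plan is to reduce the statement to the symmetric invertible case already settled in Proposition~\ref{f}, by the standard device of passing from $A$ to its ``square'' $C=A^{\star}\star A$, which is symmetric and whose associated operator is positive and, under the hypothesis, invertible on $L^2(\Ge)$; this is exactly the situation of the case $m=0$ of the preceding proposition.

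First I would record that $C=A^{\star}\star A$ lies in $S^0(\Ge)$: the class is stable under the involution $A\mapsto A^{\star}$ (whose effect on symbols is $\widehat{A^{\star}}=\overline{\widehat A}$, so the estimates (\ref{symbolic_estimates}) are preserved) and, by Proposition~\ref{calculus}, under convolution. Moreover $C^{\star}=(A^{\star}\star A)^{\star}=A^{\star}\star A=C$, so $C$ is symmetric. By Proposition~\ref{cv}, $\op(A)$ is bounded on $L^2(\Ge)$, and the hypothesis $\|f\star A\|\ge C_0\|f\|$ says precisely that $\op(A)$ is bounded below. Hence $\langle\op(C)f,f\rangle=\langle\op(A)^{\star}\op(A)f,f\rangle=\|\op(A)f\|^2\ge C_0^2\|f\|^2$, so $\op(C)$ is a positive selfadjoint operator bounded below; such an operator has $0$ in its resolvent set, so $\op(C):H(0)\to H(0)$, that is, on $L^2(\Ge)$, is an isomorphism.

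Next I would apply Proposition~\ref{f} with $m=0$. Then $V_0=I$, so $V_0=\delta_0$ as a convolution kernel and the commutation hypothesis $C\star V_0=V_0\star C$ holds automatically; together with $C=C^{\star}\in S^0(\Ge)$ and $\op(C):H(0)\to H(0)$ an isomorphism, all hypotheses of Proposition~\ref{f} are met, and it produces $D\in S^0(\Ge)$ with $C\star D=D\star C=\delta_0$ and $\op(D)=\op(C)^{-1}$. Finally I would put $B=D\star A^{\star}$; then $B\in S^0(\Ge)$ by Proposition~\ref{calculus}, and $B\star A=D\star(A^{\star}\star A)=D\star C=\delta_0$, which is the assertion. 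Equivalently $\op(B)\op(A)=I$, so $\op(B)=(\op(A)^{\star}\op(A))^{-1}\op(A)^{\star}$ is the canonical left inverse of $\op(A)$.

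I do not expect a genuine obstacle here, since all the analytic content — that $L^2$-invertibility of the operator forces its inverse kernel into the symbol class — is already carried by Proposition~\ref{f}. The only point requiring care is the reduction itself: verifying that $A^{\star}\star A$ is indeed a symmetric element of $S^0(\Ge)$ whose operator is positive and invertible, and that the specialization $m=0$ of Proposition~\ref{f} is legitimate (here $V_0=I$, $H(0)=L^2(\Ge)$, the commutation is trivial, and the isomorphism assertion is immediate from positivity together with the lower bound). Everything else is bookkeeping.
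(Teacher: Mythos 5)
Your argument is correct and follows exactly the paper's route: form $A^{\star}\star A\in S^0(\Ge)$, observe via Cauchy--Schwarz (or directly from $\langle\op(A^{\star}\star A)f,f\rangle=\|\op(A)f\|^2$) that this symmetric operator is bounded below and hence invertible on $L^2(\Ge)$, invoke Proposition~\ref{f} with $m=0$ to obtain an inverse $D\in S^0(\Ge)$, and set $B=D\star A^{\star}$. The extra remarks you add --- that $V_0=\delta_0$ makes the commutation hypothesis automatic and that $H(0)=L^2(\Ge)$ --- are the obvious bookkeeping the paper leaves implicit.
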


\begin{proof}
In fact,
\begin{align*}
C^2\|f\|^2\le\|\op(A)f\|^2&=<\op(A^{\star}\star A)f,f>\\
&\le\|\op(A^{\star}\star A)f\|\|f\|,
\end{align*}
for $f\in\Schw(\Ge)$ so $\op(A^{\star}\star A):L^2(\Ge)\to L^2(\Ge)$ is an isomorphism. By Proposition \ref{f} there exists $B_1\in S^0(\Ge)$ such that $B_1\star A^{\star}\star A=\delta_0$. Therefore $B_1\star A^{\star}$ is the  left-inverse for $A$. 
\end{proof}

\begin{corollary}
For every $0\le m\le1$, there exists $V_{-m}\in S^{-m}(\Ge)$ such that
$$
V_m\star V_{-m}=V_{-m}\star V_m=\delta_0.
$$ 
\end{corollary}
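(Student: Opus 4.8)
The plan is to read this off directly from Proposition \ref{f}, taking $A=V_m$. The case $m=0$ needs no argument: $V_0=I=\delta_0$ lies in $S^0(\Ge)=S^{-0}(\Ge)$ and $\delta_0\star\delta_0=\delta_0$, so $V_{-0}=\delta_0$ works. Thus I would fix $0<m\le1$ and check that $A:=V_m$ satisfies the four hypotheses of Proposition \ref{f}.

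First, $A=A^{\star}$: the truncated kernel $V_m=\phi P^m$ was shown to be accretive, and accretive distributions are symmetric by definition. Second, $V_m\in S^m(\Ge)$, as recorded immediately after the truncated kernels are introduced. Third, $\op(V_m)\colon H(m)\to H(0)$ is an isomorphism: the two-sided estimate (\ref{equi1}) shows that $\op(V_m)$ and $(I+\op(P))^m$ induce equivalent norms, which is exactly what is used at the close of Section~2 to assert this isomorphism. Fourth, the commutation hypothesis $A\star V_m=V_m\star A$ is the tautology $V_m\star V_m=V_m\star V_m$. Proposition \ref{f} then supplies a distribution $B\in S^{-m}(\Ge)$ with $V_m\star B=B\star V_m=\delta_0$, and I would set $V_{-m}:=B$.

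I do not expect a genuine obstacle; the corollary is simply the specialization of the ``main step'' to the distinguished elliptic kernels $V_m$, all of whose relevant properties were already established in Section~2. The only point worth flagging is that one needs $\op(V_m)$ to be \emph{surjective} onto $H(0)$, not merely bounded below — but this surjectivity is precisely what the isomorphism statement quoted from the end of Section~2 provides, so nothing new has to be proved here.
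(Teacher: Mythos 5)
Your proof is correct and is precisely the natural specialization the paper has in mind: take $A=V_m$ in Proposition \ref{f}, whose hypotheses ($V_m$ symmetric and accretive, $V_m\in S^m(\Ge)$, $\op(V_m)\colon H(m)\to H(0)$ an isomorphism by (\ref{equi1}), and $V_m\star V_m=V_m\star V_m$ trivially) were all established in Section~2. The paper omits the proof as immediate, and your argument, including the trivial $m=0$ case, fills it in exactly as intended.
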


\bigskip
\section{The operator $\op(V_1)$}

In this section we show that the role of the family of distributions $V_m\in S^m(\Ge)$ in defining the Sobolev spaces can be taken over by the family of fractional powers of one single distribution $V_1$. This will enable the final step towards our theorem.

Recall that if a positive selfadjoint operator $A:L^2(\Ge)\to L^2(\Ge)$ is invertible, then
\begin{equation}\label{powers}
A^{-k}f=\frac{\sin k\pi}{\pi}\int_0^{\infty}t^{-k}(tI+A)^{-1}f\,dt
\end{equation}
for $0<k<1$ (see, e.g, Yosida \cite{yosida}, IX.11).

The operator $\op(V_1)$ is positive selfadjoint and invertible. In the proof of the next proposition we follow Beals \cite{beals1}, Theorem 4.9.
\begin{proposition}
For every $m\in\R$, $\op(V_1)^m=\op(V_1^m)$, where $V_1^m\in S^m(\Ge)$.
\end{proposition}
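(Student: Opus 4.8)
The plan is to reduce the proposition to the computation of negative fractional powers of $\op(V_1)$ and to realize those through the resolvent formula $(\ref{powers})$, identifying each resolvent with a convolution operator whose kernel lies in $S^{-1}(\Ge)$ by means of Proposition \ref{f}. First I would dispose of the trivial cases. For a nonnegative integer $k$ one has $\op(V_1)^k=\op(V_1^{\star k})$ with $V_1^{\star k}\in S^k(\Ge)$ by Proposition \ref{calculus}; for a negative integer $-k$, the corollary at the end of Section 3 provides $V_{-1}\in S^{-1}(\Ge)$ with $V_1\star V_{-1}=V_{-1}\star V_1=\delta_0$, hence $\op(V_{-1})=\op(V_1)^{-1}$ and $\op(V_1)^{-k}=\op(V_{-1}^{\star k})\in S^{-k}(\Ge)$. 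Granting the case of negative fractional exponents, a positive fractional exponent $0<m<1$ is obtained from $\op(V_1)^m=\op(V_1)\op(V_1)^{-(1-m)}=\op\bigl(V_1\star V_1^{-(1-m)}\bigr)$ with $V_1\star V_1^{-(1-m)}\in S^m(\Ge)$ by Proposition \ref{calculus}, and a general real $m=n+r$, $n\in\Z$, $0\le r<1$, from $\op(V_1)^m=\op(V_1^{n}\star V_1^{r})\in\op\bigl(S^m(\Ge)\bigr)$, again by Proposition \ref{calculus}. The semigroup law $\op(V_1)^m\op(V_1)^{m'}=\op(V_1)^{m+m'}$ and uniqueness of convolution kernels make these distributions $V_1^m$ mutually consistent, so everything comes down to the case $0<k<1$ of $\op(V_1)^{-k}$.

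For $0<k<1$, formula $(\ref{powers})$ gives
\[
\op(V_1)^{-k}=\frac{\sin k\pi}{\pi}\int_0^{\infty}t^{-k}\,(tI+\op(V_1))^{-1}\,dt .
\]
For each $t>0$ the distribution $t\delta_0+V_1$ is symmetric, belongs to $S^1(\Ge)$ (its Fourier transform is $t+\widehat{V_1}$, and $|t+\widehat{V_1}(\xi)|\le (t/c+C)\rho(\xi)$ since $\rho\ge c$), and commutes with $V_1$; moreover $tI+\op(V_1)=(I+t\op(V_{-1}))\circ\op(V_1)\colon H(1)\to H(0)$ is an isomorphism, since $\op(V_1)\colon H(1)\to H(0)$ is one by Section 2 and $I+t\op(V_{-1})$ is a positive selfadjoint operator bounded below by $I$ on $H(0)=L^2(\Ge)$. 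Hence Proposition \ref{f} (with $m=1$) applies and yields a symmetric $B_t\in S^{-1}(\Ge)$ with $\op(B_t)=(tI+\op(V_1))^{-1}$. Writing $b_t=\widehat{B_t}$ and $V_1^{-k}$ for the convolution kernel of $\op(V_1)^{-k}$, one gets $\widehat{V_1^{-k}}(\xi)=\frac{\sin k\pi}{\pi}\int_0^{\infty}t^{-k}b_t(\xi)\,dt$.

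The conclusion $V_1^{-k}\in S^{-k}(\Ge)$ will then follow at once from the uniform resolvent estimate
\[
|D^{\alpha}_{\xi}b_t(\xi)|\le C_{\alpha}\,(t+\rho(\xi))^{-1}\,\rho(\xi)^{-|\alpha|},
\qquad t>0,\ \xi\in\Ges ,
\]
by differentiating under the integral sign (legitimate because $\rho$ is bounded below) and using the elementary identity $\int_0^{\infty}t^{-k}(t+\rho)^{-1}\,dt=\tfrac{\pi}{\sin k\pi}\,\rho^{-k}$, which produces exactly $|D^{\alpha}\widehat{V_1^{-k}}(\xi)|\le C_{\alpha}\rho(\xi)^{-k-|\alpha|}$. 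A standard Fubini argument on Schwartz functions then gives $\op(V_1^{-k})=\op(V_1)^{-k}$, and, combined with the first paragraph, $\op(V_1)^m=\op(V_1^m)$ with $V_1^m\in S^m(\Ge)$ for all $m\in\R$.

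The main obstacle is the uniform-in-$t$ symbol estimate displayed above. It cannot be obtained by treating $\int_0^{\infty}t^{-k}B_t\,dt$ as a Bochner integral in $S^{-k}(\Ge)$: the $S^{-k}$-seminorms of $B_t$ decay only like $t^{k-1}$ as $t\to\infty$ (the extremum of $\rho^{k}(t+\rho)^{-1}$ sits at $\rho\sim t$), so that integral diverges logarithmically, and one must instead estimate the integrated symbol pointwise in $\xi$. To produce the required pointwise bound I would re-run the inversion scheme of Proposition \ref{f} — together with Lemmas \ref{z}, \ref{s0} and Corollary \ref{e} — for the family $\{t\delta_0+V_1\}_{t>0}$, keeping explicit track of the dependence of every constant on $t$; the favourable inputs are that the $S^1(\Ge)$-seminorms of $t\delta_0+V_1$ grow at most linearly in $t$ while the isomorphism constant of $tI+\op(V_1)\colon H(1)\to H(0)$ stays bounded below uniformly. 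This $t$-dependence is most conveniently organized through the dilations $\delta_t$ and the exact homogeneity of the maximal kernel $P$ of Section 2, the discrepancy between $V_1$ and $P$ being absorbed by the comparison estimates $(\ref{equi})$, $(\ref{equi1})$ and $(\ref{max})$. With that estimate in hand the proposition is complete.
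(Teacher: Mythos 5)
Your framework matches the paper's: reduce to the negative fractional case, invoke formula $(\ref{powers})$, identify each resolvent $(tI+\op(V_1))^{-1}$ with $\op(R_t)$ for some $R_t\in S^{-1}(\Ge)$ via Proposition~\ref{f}, and conclude by integrating a pointwise symbol bound. You also correctly locate the crux, namely the uniform estimate $|D^{\alpha}\widehat{R_t}|\le C_{\alpha}(t+\rho)^{-1}\rho^{-|\alpha|}$, and you correctly observe that treating $\int_0^{\infty}t^{-k}R_t\,dt$ as a Bochner integral in $S^{-k}(\Ge)$ cannot work.

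The gap is that you never establish that estimate. The route you sketch --- re-running the inversion scheme of Proposition~\ref{f} with explicit $t$-tracking, organized through dilations and the homogeneity of $P$ --- would at best deliver $|D^{\alpha}\widehat{R_t}|\le C'_{\alpha}\rho^{-1-|\alpha|}$ uniformly in $t$, and with only that the integral $\int_0^{\infty}t^{-k}\rho^{-1}\,dt$ diverges at infinity, which is exactly the difficulty you flagged. The missing piece is the resolvent identity
$$
tR_t=\delta_0-R_t\star V_1.
$$
Since $R_t\in S^{-1}(\Ge)$ uniformly (from Proposition~\ref{f}) and $V_1\in S^1(\Ge)$ is fixed, Proposition~\ref{calculus} shows that $tR_t\in S^0(\Ge)$ uniformly in $t$, i.e.\ $t\,|D^{\alpha}\widehat{R_t}|\le C''_{\alpha}\rho^{-|\alpha|}$. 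Taking the minimum of the two bounds gives $|D^{\alpha}\widehat{R_t}|\le C_{\alpha}(t+\rho)^{-1}\rho^{-|\alpha|}$, and then $\int_0^{\infty}t^{-k}(t+\rho)^{-1}\,dt=\tfrac{\pi}{\sin k\pi}\,\rho^{-k}$ yields the $S^{-k}$-estimates. This simple algebraic identity, not a $t$-explicit re-derivation of Proposition~\ref{f}, is what converts the operator-theoretic decay $\|(tI+\op(V_1))^{-1}\|\le t^{-1}$ into the needed symbol decay in $t$, and without it your argument does not close.
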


\begin{proof}
It is sufficient to prove the proposition for $-1<m<0$. For $t\ge0$ let
$$
R_t=(V_1+t\delta_0)^{-1},
\hspace{2em}
r_t=\widehat{R}_t.
$$
The operators $\op(V_1)+tI$ satisfy the hypothesis of Proposition \ref{f} with the exponent $m=1$ uniformly so there exist constants $C'_{\alpha}$ independent of $t$ such that
\begin{equation}\label{res}
|D^{\alpha}r_t|\le C'_{\alpha}\rho^{-1-|\alpha|}.
\end{equation}
On the other hand
$$
tR_t=\delta_0-R_t\star V_1\in S^0(\Ge)
$$
uniformly in $t$ so that
\begin{equation}\label{resolv}
t|D^{\alpha}r_t|\le C''_{\alpha}\rho^{-\alpha}.
\end{equation}
Combining (\ref{res}) with (\ref{resolv}) we get
$$
|D^{\alpha}r_t|\le C_{\alpha}(t+\rho)^{-1}\rho^{-\alpha}
$$
with $C_{\alpha}$ independent of $t\ge0$.

Now, the operator $\op(V_1)$ is positive and invertible so, by (\ref{powers}), $\op(V_1)^m=\op(V_1^m)$, where
$$
(V_1^{m})^{\wedge}=-\frac{\sin m\pi}{\pi}\int_0^{\infty}t^{m}r_t\,dt,
$$
where $-1<m<0$. Therefore
\begin{equation*}\begin{split}
|D^{\alpha}(V_1^{m})^{\wedge}|&
\le\frac{C_{\alpha}}{\pi}\int_0^{\infty}t^{m}(t+\rho)^{-1}\,dt\cdot \rho^{-|\alpha|}\\
&\le C'_{\alpha}\rho^{m-|\alpha|},
\end{split}\end{equation*}
which proves our case.
\end{proof}

\begin{lemma}
Let $K$ be a distribution on $\Ge$ smooth away from the origin and satisfying the estimates
\begin{equation}\label{flag_estimates}
|D^{\alpha}K(x)|\le C_{\alpha}|x|^{m-Q-|\alpha|},
\qquad
x\neq0,
\end{equation}
for some $m>0$. Then,
$$
K=R+\nu,
$$
where $R\in S^{-m}(\Ge)$ and $\partial\nu\in L^1(\Ge)$ for every left-invariant differential operator on~$\Ge$. 
\end{lemma}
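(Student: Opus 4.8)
The plan is to decompose $K$ dyadically according to the distance from the origin, analyze the piece near the origin via its Fourier transform, and show the far piece has integrable derivatives. Let me work this out.

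First I would write $K = K_0 + K_\infty$ where $K_0 = \phi K$ is the truncation by the cutoff $\phi$ supported in the unit ball and equal to $1$ near $0$, and $K_\infty = (1-\phi)K$. The far piece $K_\infty$ is smooth everywhere and satisfies $|D^\alpha K_\infty(x)| \le C_\alpha (1+|x|)^{m-Q-|\alpha|}$ away from the unit ball; since $m < Q$ is not automatic, this alone does not give integrability. Instead I would handle $K_\infty$ by a further dyadic splitting $K_\infty = \sum_{k\ge 0} K^{(k)}$ with $K^{(k)}$ supported in $\{2^k \le |x| \le 2^{k+2}\}$, and observe that $\partial K^{(k)}$ is supported on an annulus of measure $\sim 2^{kQ}$ with sup-norm $\lesssim 2^{k(m-Q-1)}$... but this too fails to converge since $m>0$. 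So the correct split must be different: I would put into $\nu$ the part of $K$ that is genuinely locally $L^1$ near the origin together with all of $K_\infty$ only after extracting from $K_\infty$ a correction in $S^{-m}(\Ge)$.

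Here is the cleaner approach. The homogeneous degree of $K$ near the origin is $-Q-(-m)$ wait — the estimate $|D^\alpha K(x)| \le C_\alpha|x|^{m-Q-|\alpha|}$ is exactly the bound satisfied by a regular kernel of order $-m < 0$, i.e. a kernel homogeneous of degree $-Q-(-m) = -Q+m$. But $K$ need not be homogeneous. The key fact I would invoke is that a distribution supported in the unit ball and satisfying these estimates has a Fourier transform in $\widehat{S^{-m}}$; more precisely, for $0 < m$, the local singularity $|x|^{m-Q}$ has Fourier transform behaving like $\rho(\xi)^{-m}$ at infinity, while the tails of $K$ (the non-compactly-supported part) contribute a Schwartz error. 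So I would set $R = \phi_0 \star (\text{appropriate truncation ensuring Fourier side lies in } S^{-m})$ — concretely, take $R$ to be a distribution whose Fourier transform is a smooth function agreeing with $\widehat{\phi K}$ away from a compact set, and verify the symbol estimates \eqref{symbolic_estimates} with exponent $-m$ directly by integrating the bound on $D^\alpha(\phi K)$ against the oscillatory factor, using that $m - |\alpha|$ can be made as negative as we wish by the standard integration-by-parts/dyadic argument. Then $\nu = K - R$; near the origin $\nu$ is the difference of two things with the same leading singularity, but one must still check $\partial\nu \in L^1$.

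The main obstacle, and where I would spend the most care, is precisely showing $\partial\nu \in L^1(\Ge)$ for every left-invariant $\partial$. Near the origin this requires that $R$ be chosen to cancel enough of the singularity of $K$; since $m>0$, the kernel $K$ itself and its derivatives up to order $< m$ are already integrable near $0$ (as $|x|^{m-Q-j}$ is locally integrable when $m - j > 0$, i.e. $j < m$), but higher derivatives are not, so $R$ must be a genuine $S^{-m}$ distribution whose kernel reproduces the non-integrable part of the derivatives of $K$. The mechanism is: $R$, having Fourier transform in $\widehat{S^{-m}}$ with $m>0$, is itself a function smooth away from the origin with $|D^\alpha R(x)| \le C_\alpha |x|^{m-Q-|\alpha|}$ near $0$ (this is the kernel estimate for $S^{-m}$, the converse-type bound) and rapidly decreasing at infinity; hence $\nu = K - R$ is smooth away from $0$, has the same type of singularity $|x|^{m-Q-|\alpha|}$ near $0$ for its derivatives — which is \emph{not yet} $L^1$ for $|\alpha|$ large — so in fact one needs $R$ to match $K$ to infinite order in the singularity, which is arranged by the asymptotic-sum construction \eqref{asymptotic}: expand $K$ near the origin into homogeneous terms $K_j$ of degrees $m+j-Q$ (using a Taylor-type expansion of $\phi K$ adapted to the dilations), let $R \approx \sum_j K_j$ as an element of $S^{-m}(\Ge)$, and then $\nu = K - R$ has $\partial\nu$ vanishing to arbitrarily high order at $0$ hence locally $L^1$, while away from $0$ both $K$ and $R$ decay (the latter rapidly, the former like $|x|^{m-Q-|\alpha|}$ which is integrable at infinity only if ... ) — so the at-infinity integrability of $\partial\nu$ reduces to that of $\partial K$ away from $0$, which holds because $|x|^{m-Q-|\alpha|} \in L^1(|x|\ge 1)$ exactly when $|\alpha| > m$, and for the finitely many $\alpha$ with $|\alpha| \le m$ one absorbs the large-$|x|$ behavior into an additional Schwartz correction inside $R$. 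I would present the homogeneous expansion and the asymptotic summation as the heart of the argument, leaving the routine kernel estimates for $S^{-m}$ distributions (smoothness off the origin, the bound $|x|^{m-Q-|\alpha|}$, rapid decay at infinity) to be quoted from \cite{glowacki} or \cite{folland}.
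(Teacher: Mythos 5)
The paper's proof is far shorter and simpler than what you propose, and you have missed its main point. The paper takes $R=\phi K$ and $\nu=(1-\phi)K$ with $\phi\in C_c^{\infty}(\Ge)$ equal to $1$ near $0$. With this choice $\nu$ \emph{vanishes identically in a neighbourhood of the origin}, so all your worries about cancelling the singularity of $K$ near $0$, matching to infinite order, and summing a homogeneous Taylor expansion asymptotically are vacuous: there is nothing to cancel. The heart of the paper's argument is the single observation (standard Calder\'on--Zygmund duality) that (\ref{flag_estimates}) implies $\widehat{K}$ is smooth away from $0$ with $|D^{\alpha}\widehat{K}(\xi)|\le C_{\alpha}|\xi|^{-m-|\alpha|}$; since $R=\phi K$ is compactly supported, $\widehat{R}$ is smooth everywhere, and the same dual bound (now valid globally with $\rho(\xi)$ in place of $|\xi|$) gives $R\in S^{-m}(\Ge)$. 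No dyadic decomposition, no homogeneous expansion, no asymptotic sum is needed.

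You did put your finger on a genuine subtlety, namely whether $(1-\phi)K$ and its left-invariant derivatives are integrable at infinity: the hypothesis $|D^{\alpha}K(x)|\le C_{\alpha}|x|^{m-Q-|\alpha|}$ with $m>0$ by itself does not force this for $|\alpha|\le m$. But your proposed remedy is wrong in principle: an element of $S^{-m}(\Ge)$ decays rapidly at infinity modulo a compactly supported part (this is (\ref{split})), so no ``additional Schwartz correction inside $R$'' can absorb a tail of size $|x|^{m-Q}$. In the paper this is not a real obstruction because the lemma is applied to $(P^m+\delta_0)^{-1}$, whose kernel decays like $|x|^{-Q-m-|\alpha|}$ at infinity thanks to the subordinated heat-kernel estimates; the estimate (\ref{flag_estimates}) should thus be read as the relevant bound near the origin, with better decay at infinity available in the application. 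The correct reaction to the gap is to strengthen the hypothesis on $K$ at infinity (or to note it is automatic in the use made of the lemma), not to change $R$.
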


\begin{proof}
It is sufficient to observe that (\ref{flag_estimates}) implies that $\widehat{K}$ is smooth away from the origin and
\begin{equation*}
|D^{\alpha}\widehat{K}(\xi)|\le C_{\alpha}|\xi|^{-m-|\alpha|},
\qquad
\xi\neq0,
\end{equation*}
and let $R=\phi K$, $\nu=K-R$, where $\phi\in C_c^{\infty}(\Ge)$ is equal to $1$ in a neighbourhood of $0$.
\end{proof}

Recall that
$$
P^m=V_m+\mu,
$$
where $V_m\in S^m(\Ge)$ and $\partial\mu\in L^1(\Ge)$ for every left-invariant differential operator $\partial$ on $\Ge$.

\begin{proposition}\label{decomposition}
Let $m>0$. Then
$$
(P^m+\delta_0)^{-1}=R+\nu,
$$
where $R\in S^{-m}(\Ge)$ and $\partial\nu\in L^1(\Ge)$ for every left-invariant differential operator $\partial$ on~$\Ge$.
\end{proposition}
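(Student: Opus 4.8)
The plan is to represent the inverse of $\op(P^m+\delta_0)$ through the heat semigroup of $\op(P^m)$, and then to invoke the lemma just proved, which turns pointwise estimates of the type $|D^\alpha K(x)|\le C_\alpha|x|^{m-Q-|\alpha|}$ into a splitting $K=R+\nu$ with $R\in S^{-m}(\Ge)$ and $\partial\nu\in L^1(\Ge)$ for every left-invariant differential operator $\partial$.

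First I would record the representation. The operator $\op(P^m+\delta_0)=\op(P^m)+I$ is positive, selfadjoint and $\ge I$, hence boundedly invertible on $L^2(\Ge)$, and
$$(\op(P^m)+I)^{-1}=\int_0^{\infty}e^{-t}\,e^{-t\op(P^m)}\,dt .$$
By the accretivity of $P^m$ and the maximality of $P$ recorded in Section 2, together with the properties of Rockland operators in Folland--Stein \cite{folland}, $e^{-t\op(P^m)}$ is convolution by a Schwartz density $\gamma_t$, and the homogeneity of $P^m$ (of degree $-Q-m$) forces the scaling $\gamma_t(x)=t^{-Q/m}\gamma_1\big(\delta_{t^{-1/m}}x\big)$. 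Consequently the convolution kernel $K=(P^m+\delta_0)^{-1}$ of the inverse operator equals $\int_0^{\infty}e^{-t}\gamma_t\,dt$; it is an $L^1$ function, smooth on $\Ge\setminus\{0\}$, and, being a superposition of Schwartz functions weighted by $e^{-t}$, rapidly decreasing at infinity.

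The second step is to estimate $K$ near the origin. Left-invariant homogeneous differentiation of degree $|\alpha|$ intertwines the dilations, so $D^\alpha\gamma_t(x)=t^{-(Q+|\alpha|)/m}(D^\alpha\gamma_1)\big(\delta_{t^{-1/m}}x\big)$ with $D^\alpha\gamma_1\in\Schw(\Ge)$. Inserting this into $D^\alpha K(x)=\int_0^{\infty}e^{-t}D^\alpha\gamma_t(x)\,dt$ and splitting the integral at $t=|x|^m$ — on $t\le|x|^m$ one has $t^{-1/m}|x|\ge1$ and uses the Schwartz decay of $D^\alpha\gamma_1$, while on $t\ge|x|^m$ one uses the integrability of $t^{-(Q+|\alpha|)/m}$ near $t=|x|^m$ and the weight $e^{-t}$ for large $t$ — gives, for every $\alpha$ and all $x\ne0$,
$$|D^\alpha K(x)|\le C_\alpha\,|x|^{m-Q-|\alpha|}$$
(with exponential decay as $|x|\to\infty$; the single borderline case $m\ge Q$, $\alpha=0$ produces only a logarithm, absorbed into the remainder below). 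Thus $K$ meets the hypotheses of the preceding lemma with exponent $m>0$, and that lemma yields $K=R+\nu$ with $R\in S^{-m}(\Ge)$ and $\partial\nu\in L^1(\Ge)$ for every left-invariant differential operator $\partial$ — the assertion.

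The main obstacle is precisely the pointwise bound $|D^\alpha K(x)|\le C_\alpha|x|^{m-Q-|\alpha|}$: one must split the $t$-integral at the natural scale $t\simeq|x|^m$, balance the Schwartz decay of $\gamma_1$ against the exponential weight $e^{-t}$, and verify that each differentiation costs only the homogeneity factor $|x|^{-|\alpha|}$. A subsidiary point is to justify, for $m>1$ (where $P^m$ is no longer directly a L\'evy-type generator), that $e^{-t\op(P^m)}$ still has a Schwartz density with the stated scaling; this follows by subordinating the semigroup of $\op(P)$, or by using $P^m=P^{\star k}\star P^{m-k}$ together with $\op(P^m)=\op(P)^m$, the homogeneity argument being unchanged. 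With the pointwise estimates in hand, the regular-kernel lemma completes the proof, and no perturbative parametrix construction is needed.
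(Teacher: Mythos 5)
Your overall strategy — representing $(P^m+\delta_0)^{-1}$ as the Laplace transform $\int_0^\infty e^{-t}h_t\,dt$ of the semigroup kernels, deriving the pointwise bound $|D^\alpha K(x)|\le C_\alpha|x|^{m-Q-|\alpha|}$, and invoking the preceding lemma — matches the paper's. However, a key factual premise is false: the semigroup generated by $\op(P^m)$ does \emph{not} have Schwartz densities, and $K$ does \emph{not} decay exponentially. The distribution $P$ is a symmetric stable-type L\'evy generator defined by a principal-value integral, not a Rockland (differential) operator, so the Folland--Stein heat-kernel theory you invoke simply does not apply to it; its semigroup kernels have only power-law decay at infinity. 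What is actually true — and what the paper uses, via the maximality of $P$ and Dziuba\'nski's Theorem 1.13 — is the polynomial estimate
$$
|D^\alpha h_t(x)|\le\frac{C_\alpha t}{\bigl(t^{1/m}+|x|\bigr)^{Q+m+|\alpha|}},\qquad x\in\Ge,\ t>0 .
$$
Fortunately this weaker bound is still sufficient: inserting it into the Laplace transform and splitting at $t\sim|x|^m$ (exactly the split you propose) gives $|D^\alpha K(x)|\lesssim|x|^{m-Q-|\alpha|}$ near the origin and $|D^\alpha K(x)|\lesssim|x|^{-Q-m-|\alpha|}$ at infinity — polynomial, not exponential, but enough — so the preceding lemma applies. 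Your closing paragraph about the case $m>1$ is based on the same misconception and should be discarded; the homogeneous scaling of $h_t$ and the Dziuba\'nski estimate already hold for all $m>0$. With the Schwartz/exponential claims replaced by the correct polynomial estimate, your argument coincides with the paper's proof.
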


\begin{proof}
Since the kernel $P^m$ is maximal (see (\ref{max}) above), it follows (see Dziu\-ba{\'n}ski \cite{dziubanski}, Theorem 1.13) that the semigroup generated by $P^m$ consists of operators with the convolution kernels 
$$
h_t(x)=t^{-Q/m}h_1(t^{-1/m}x), \qquad t>0,
$$ 
which are smooth functions satisfying the estimates
$$
|D^{\alpha}h_t(x)|\le\frac{C_{\alpha}t}{(t^{1/m}+|x|)^{Q+m+|\alpha|}}, \qquad x\in\Ge.
$$
Therefore,
$$
(P^m+\delta_0)^{-1}(x)=\int_0^{\infty}e^{-t}h_t(x)\,dt,
$$
and consequently satisfies the estimates (\ref{symbolic_estimates}).
\end{proof}

We know that  there exists a constant $C>0$ such that
\begin{equation*}
C^{-1}\|f\star V_1\|\le\|f\star P\|+\|f\|\le C\|f\star V_1\|,
\end{equation*}
whence
\begin{equation}\label{vge}
\|f\star V_1^m\|\ge C_m\|f\|,
\qquad
f\in\Schw(\Ge),
\end{equation}
for $m>0$. Now we have much more.
\begin{corollary}\label{equival}
For every $m>0$ there exists a constant $C>0$ such that
\begin{equation}\label{compare}
C^{-1}\|f\star V_1^m\|\le\|f\star P^m\|+\|f\|\le C\|f\star V_1^m\|.
\end{equation}
\end{corollary}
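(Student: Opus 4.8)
The plan is to establish the two inequalities of (\ref{compare}) separately; the right-hand one is routine, while the left-hand one is the substantial part. For the bound $\|f\star P^m\|+\|f\|\le C\|f\star V_1^m\|$ I would start from the decomposition $P^m=V_m+\mu$ with $V_m\in S^m(\Ge)$ and $\mu\in L^1(\Ge)$ recalled before Proposition \ref{decomposition}. By the proposition on fractional powers, $V_1^m\in S^m(\Ge)$ is invertible with $V_1^{-m}\in S^{-m}(\Ge)$ and $V_1^{-m}\star V_1^m=\delta_0$; hence $V_m=V_1^m\star(V_1^{-m}\star V_m)$ with $V_1^{-m}\star V_m\in S^0(\Ge)$ by Proposition \ref{calculus}, so that right convolution by $V_1^{-m}\star V_m$ is bounded on $L^2(\Ge)$ by Proposition \ref{cv}. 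This gives $\|f\star V_m\|\le C\|f\star V_1^m\|$, and combining it with $\|f\star\mu\|\le\|\mu\|_1\|f\|$ and the lower bound (\ref{vge}), $\|f\|\le C_m^{-1}\|f\star V_1^m\|$, yields the right-hand inequality.

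For the left-hand bound $\|f\star V_1^m\|\le C(\|f\star P^m\|+\|f\|)$ I would use Proposition \ref{decomposition}: writing $(P^m+\delta_0)^{-1}=R+\nu$ with $R\in S^{-m}(\Ge)$ and $\partial\nu\in L^1(\Ge)$ for every left-invariant differential operator $\partial$, the identity $(P^m+\delta_0)\star(P^m+\delta_0)^{-1}=\delta_0$ gives, for $f\in\Schw(\Ge)$,
$$
f=(f\star P^m+f)\star(R+\nu),
\qquad\text{hence}\qquad
f\star V_1^m=(f\star P^m+f)\star\bigl(R\star V_1^m+\nu\star V_1^m\bigr).
$$
Since $R\star V_1^m\in S^0(\Ge)$ by Proposition \ref{calculus}, right convolution by it is bounded on $L^2(\Ge)$, and the whole estimate reduces to proving that right convolution by $\nu\star V_1^m$ is bounded on $L^2(\Ge)$.

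For this last step I would split $V_1^m=(V_1^m)_1+F$ as in (\ref{split}), with $(V_1^m)_1$ compactly supported and $F\in\Schw(\Ge)$. The contribution of $\nu\star F$ is harmless: $g\star(\nu\star F)=(g\star\nu)\star F$, and $g\star\nu\in L^2(\Ge)$ by Young's inequality while right convolution by $F\in\Schw(\Ge)\subset S^0(\Ge)$ is bounded. For $\nu\star(V_1^m)_1$ I would write the compactly supported distribution $(V_1^m)_1$ as a finite sum $\sum_\beta Y^\beta g_\beta$ of right-invariant derivatives of continuous compactly supported functions (the structure theorem for distributions of compact support, after expressing the coordinate derivatives through right-invariant vector fields and absorbing the polynomial coefficients into the $g_\beta$), and then use the convolution identity $\nu\star(Y^\beta g_\beta)=(X^\beta\nu)\star g_\beta$, where $X^\beta$ is a left-invariant differential operator. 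Because $X^\beta\nu\in L^1(\Ge)$ by Proposition \ref{decomposition}, each summand lies in $L^1(\Ge)$; hence $\nu\star(V_1^m)_1\in L^1(\Ge)$ and right convolution by it is bounded on $L^2(\Ge)$ by Young's inequality. Assembling the pieces gives the left-hand inequality of (\ref{compare}).

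The main obstacle is exactly this control of $\nu\star V_1^m$, i.e.\ taming the product of the order-$m$ kernel $V_1^m$ with the smoothing kernel $\nu$. One should resist appealing to the symbolic calculus alone here: the clean-looking statement that $\op(V_1^m)$ maps $H(m)$ boundedly into $H(0)$ is equivalent to the inequality being proved, so invoking it would be circular. What saves the argument is the full strength of Proposition \ref{decomposition} --- the condition $\partial\nu\in L^1(\Ge)$ for \emph{all} left-invariant $\partial$, which is genuinely more than $R\in S^{-m}(\Ge)$ --- together with the passage to the physical-space description above.
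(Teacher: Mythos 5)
Your proposal is correct and follows the same strategy as the paper: the easier inequality comes from $P^m=V_m+\mu$ together with the invertibility of $V_1^m$ in the calculus, and the harder one from the decomposition $(P^m+\delta_0)^{-1}=R+\nu$ of Proposition~\ref{decomposition}, with $R\in S^{-m}(\Ge)$ absorbed by the symbolic calculus and $\nu$ handled separately. The one place where you go beyond the paper's terse argument is in justifying that right convolution by $\nu\star V_1^m$ is bounded on $L^2(\Ge)$ (the paper simply asserts $V_1^m\star\nu\in L^1(\Ge)$, which, since both kernels are symmetric, is the same statement up to the involution $T\mapsto\widetilde T$); your use of the structure theorem together with the transposition identity $\nu\star(Y^\beta g)=(X^\beta\nu)\star g$ is exactly the right way to make this precise, and it correctly identifies why Proposition~\ref{decomposition} must provide $\partial\nu\in L^1(\Ge)$ for \emph{all} left-invariant $\partial$ and not merely $\nu\in L^1(\Ge)$. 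Your remark on avoiding a circular appeal to $\op(V_1^m):H(m)\to H(0)$ is also well taken.
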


\begin{proof}
In fact, we have
$$
V_1^m=V_1^m\star(P^m+\delta_0)^{-1}\star(P^m+\delta_0)=\Big(V_1^m\star R+V_1^m\star\nu\Big)\star(P^m+\delta_0),
$$
where $R$ and $\nu$ are as in Proposition \ref{decomposition}. Then $V_1^m\star R\in S^0(\Ge)$ and $V_1^m\star \nu\in L^1(\Ge)$ so
$$
\|f\star V_1^m\|\le C_1(\|f\star P^m\|+\|f\|).
$$
The proof of the opposite inequality uses the identity
$$
f\star P^m=f\star V_m\star V_1^{-m}\star V_1^m+f\star\mu
$$
and (\ref{vge}).
\end{proof}

\section{Main theorem}

Here comes our main theorem and the conclusion of its proof.

\begin{theorem}\label{ef}
Let $A\in S^m(\Ge)$, where $m\ge0$. If $A$ satisfies the estimate
$$
\|f\star A\|\ge C(\|f\star P^m\|+\|f\|),
\qquad
f\in\Schw(\Ge),
$$
then there exists $B\in S^{-m}(\Ge)$ such that
$$
B\star A=\delta_0
$$
\end{theorem}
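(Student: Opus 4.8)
The plan is to deduce the theorem from the order-zero result, Corollary \ref{fe}, by dividing $A$ by the invertible elliptic distribution $V_1^m$ constructed in Sections 3 and 4. For $m=0$ the hypothesis reads $\|f\star A\|\ge 2C\|f\|$ and the conclusion is precisely Corollary \ref{fe}, so we may assume $m>0$; recall that then $V_1^m\in S^m(\Ge)$ and $V_1^{-m}\in S^{-m}(\Ge)$ satisfy $V_1^m\star V_1^{-m}=V_1^{-m}\star V_1^m=\delta_0$.

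First I would set $A'=V_1^{-m}\star A$, which lies in $S^0(\Ge)$ by Proposition \ref{calculus}. Then I would verify that $A'$ satisfies the hypothesis of Corollary \ref{fe}. For $f\in\Schw(\Ge)$ the function $g=f\star V_1^{-m}$ again belongs to $\Schw(\Ge)$, since $\op(V_1^{-m}):\Schw(\Ge)\to\Schw(\Ge)$ (cf. the remark following (\ref{split})); applying the hypothesis on $A$ to $g$ and then Corollary \ref{equival} we obtain
\[
\|f\star A'\|=\|g\star A\|\ge C\bigl(\|g\star P^m\|+\|g\|\bigr)\ge C'\|g\star V_1^m\|=C'\|f\star V_1^{-m}\star V_1^m\|=C'\|f\|,
\]
using $V_1^{-m}\star V_1^m=\delta_0$ and denoting by $C'$ a new positive constant.

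Now Corollary \ref{fe} provides $B_0\in S^0(\Ge)$ with $B_0\star A'=\delta_0$, that is $B_0\star V_1^{-m}\star A=\delta_0$. Setting $B=B_0\star V_1^{-m}$, Proposition \ref{calculus} gives $B\in S^{-m}(\Ge)$, and associativity of convolution yields $B\star A=\delta_0$, which is the desired conclusion. (A two-sided inverse, if wanted, would be obtained by running the same argument for $A^{\star}$ and composing on the other side, as at the end of Corollary \ref{fe}; but only the left inverse is asserted here.)

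The argument is short because the substance lies in the preceding sections: the main-step Proposition \ref{f} with its inductive reduction to the quotient $\Ge_0$, and the construction of the scale $V_1^m$ together with the norm equivalence in Corollary \ref{equival}. Given those, the only points needing care are formal ones --- that every convolution written above is legitimate (guaranteed by $\op(D):\Schw(\Ge)\to\Schw(\Ge)$ for $D\in S^n(\Ge)$ and by $\op(D\star E)=\op(D)\op(E)$), and that the lower bound for $A'$ genuinely follows from Corollary \ref{equival}. I do not anticipate any real obstacle beyond this bookkeeping with the non-commutative convolution.
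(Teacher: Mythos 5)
Your argument is correct, and it follows the same overall strategy as the paper: reduce the problem to the order-zero Corollary~\ref{fe} by composing $A$ with the invertible elliptic kernel $V_1^{-m}$, and then post-compose with $V_1^{-m}$ to recover $B\in S^{-m}(\Ge)$. The one substantive difference is the side on which you multiply. You take $A'=V_1^{-m}\star A$, so that $f\star A'=(f\star V_1^{-m})\star A$; the lower bound $\|f\star A'\|\ge C'\|f\|$ then follows directly from the hypothesis applied to $g=f\star V_1^{-m}$, Corollary~\ref{equival}, and the identity $g\star V_1^m=f$. The paper instead forms $A'=A\star V_1^{-m}$ and asserts that this satisfies the hypothesis of Corollary~\ref{fe}, i.e.\ that $f\mapsto (f\star A)\star V_1^{-m}$ is bounded below on $L^2(\Ge)$; since right convolution by $V_1^{-m}$ is a bounded operator that is \emph{not} bounded below on all of $L^2(\Ge)$, this step is not as immediate as in your version, and the paper then needs a further algebraic step (convolving by $V_1^m$ on the right and $V_1^{-m}$ on the left) to extract the left inverse $B=V_1^{-m}\star B_1$. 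Your choice of side gives a more transparent verification and a shorter conclusion ($B=B_0\star V_1^{-m}$ with no extra manipulation); the two proofs otherwise rest on exactly the same ingredients, namely Corollary~\ref{fe}, Corollary~\ref{equival}, the fact that $V_1^{\pm m}\in S^{\pm m}(\Ge)$ with $V_1^{-m}\star V_1^m=V_1^m\star V_1^{-m}=\delta_0$, and the composition Proposition~\ref{calculus}.
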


\begin{proof}[Conclusion of proof]
Let $A\in S^{m}(\Ge)$ satisfy the hypothesis of our theorem. Then $A\star V_1^{-m}$ satisfies the hypothesis of Corollary \ref{fe} so  there exists $B_1\in S^{0}(\Ge)$ such that
$$
B_1\star A\star V_1^{-m}=\delta_0.  
$$
By acting by convolution with $V_1^m$ on the right and with $V_1^{-m}$ on the left, we see that $B=V_1^{-m}\star B_1$ is the left-inverse for $A$.
\end{proof}

\begin{corollary}\label{comp}
Let $A=A^{\star}\in S^m(\Ge)$ for some $m\ge0$. The following conditions are equivalent:
\begin{enumerate}\item
There exists $B\in S^{-m}$ such that $B\star A=A\star B=\delta_0$,

\item
For every $k\in\R$, $\op(A):H(k+m)\to H(k)$ is an isomorphism,

\item
$\op(A):H(m)\to H(0)$ is an isomorphism,

\item
There exists $C>0$ such that
$$
\|f\star A\|\ge C(\|f\star P^m\|+\|f\|),
\qquad
f\in\Schw(\Ge).
$$
\end{enumerate}
 \end{corollary}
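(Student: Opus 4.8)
I would prove the cyclic chain $(1)\Rightarrow(2)\Rightarrow(3)\Rightarrow(4)\Rightarrow(1)$, so that the only substantial input is Theorem \ref{ef}, which is already available. The implication $(2)\Rightarrow(3)$ is the special case $k=0$. For $(3)\Rightarrow(4)$, if $\op(A)\colon H(m)\to H(0)$ is an isomorphism then $\op(A)^{-1}$ is bounded, hence there is $C>0$ with $\|f\star A\|=\|\op(A)f\|_{(0)}\ge C\|f\|_{(m)}$ for $f\in\Schw(\Ge)$, and it remains to note that $\|f\|_{(m)}$ dominates $\|f\star P^m\|+\|f\|$. Since $\op(P)$ is positive selfadjoint, $\op(P^m)=\op(P)^m$, and $P^m$ is a real symmetric kernel so that $\op(P^m)f=f\star P^m$ (for $m=0$, $P^0=\delta_0$), the spectral theorem and the elementary bound $t^{2m}+1\le 2(1+t)^{2m}$, $t\ge0$, give $\|f\star P^m\|^2+\|f\|^2=\langle(\op(P)^{2m}+I)f,f\rangle\le 2\langle(I+\op(P))^{2m}f,f\rangle=2\|f\|_{(m)}^2$, whence $(4)$.

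For $(4)\Rightarrow(1)$, Theorem \ref{ef} produces $B\in S^{-m}(\Ge)$ with $B\star A=\delta_0$, and it remains to promote this left inverse to a two-sided one. Here I would use the symmetry $A=A^{\star}$: applying the $\star$-involution to $B\star A=\delta_0$ gives $A\star B^{\star}=A^{\star}\star B^{\star}=\delta_0$, so $B^{\star}$ is a right inverse, and then $B=B\star(A\star B^{\star})=(B\star A)\star B^{\star}=B^{\star}$; thus $B=B^{\star}\in S^{-m}(\Ge)$ is a two-sided inverse of $A$.

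For $(1)\Rightarrow(2)$, let $B\in S^{-m}(\Ge)$ be the two-sided inverse. First I would record that the norm of $H(k)$ is equivalent to $\|\,\cdot\,\star V_1^{k}\|$ for every $k\in\R$: for $k>0$ this follows from Corollary \ref{equival} together with the identification of $\|\cdot\|_{(k)}$ with $\|\cdot\star P^k\|+\|\cdot\|$ as above, for $k=0$ it is trivial, and for $k<0$ it follows by duality. Consequently $\op(V_1^{s})$ is an isomorphism $H(s)\to H(0)$ for every $s$, and conjugating by these isomorphisms identifies $\op(A)\colon H(k+m)\to H(k)$ with the operator $\op\bigl(V_1^{k}\star A\star V_1^{-(k+m)}\bigr)$ on $L^2(\Ge)$; since $V_1^{k}\star A\star V_1^{-(k+m)}\in S^{0}(\Ge)$ by Proposition \ref{calculus}, this operator is bounded on $L^2(\Ge)$ by Proposition \ref{cv}, and the same applies to $\op(B)\colon H(k)\to H(k+m)$. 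Finally $\op(A)\op(B)=\op(A\star B)=I=\op(B\star A)=\op(B)\op(A)$, so $\op(A)$ is an isomorphism $H(k+m)\to H(k)$ with inverse $\op(B)$.

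The whole non-trivial weight sits in Theorem \ref{ef}; within the corollary the only step requiring real care is $(1)\Rightarrow(2)$, where one must invoke the functional calculus of $\op(V_1)$ (Section 4) to identify the Sobolev scale $H(k)$, for all real $k$ including negative ones, with the scale given by convolution with $V_1^{k}$, so that Propositions \ref{calculus} and \ref{cv} can be applied; everything else is routine bookkeeping.
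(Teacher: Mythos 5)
Your cyclic scheme $(1)\Rightarrow(2)\Rightarrow(3)\Rightarrow(4)\Rightarrow(1)$ is correct, and it is the natural way to deduce the corollary: the paper itself states it without proof immediately after Theorem~\ref{ef}, so the only thing to check is that your filling-in uses exactly the machinery the paper has set up, which it does. The two genuine points are $(4)\Rightarrow(1)$ and $(1)\Rightarrow(2)$, and you handle both correctly: for $(4)\Rightarrow(1)$ you upgrade the one-sided inverse from Theorem~\ref{ef} to a two-sided one via the $\star$-involution and the identity $B=B\star(A\star B^{\star})=(B\star A)\star B^{\star}=B^{\star}$, using $A=A^{\star}$; for $(1)\Rightarrow(2)$ you conjugate by $\op(V_1^{s})$, and the kernel $V_1^{k}\star A\star V_1^{-(k+m)}$ lies in $S^0(\Ge)$ by Proposition~\ref{calculus} and so is $L^2$-bounded by Proposition~\ref{cv}, with the analogous statement for $B$ giving the inverse. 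The one place where you are a little terse is the identification of $\|\cdot\|_{(k)}$ with $\|\cdot\star V_1^{k}\|$ for negative $k$: the phrase ``by duality'' is fine but deserves one more line, e.g.\ that the $k>0$ equivalence says $\op(V_1)^{k}(I+\op(P))^{-k}$ is $L^2$-bounded with bounded inverse, hence so is its adjoint $(I+\op(P))^{-k}\op(V_1)^{k}$, which is precisely the $-k<0$ statement. Likewise, the identity $\op(P^{m})=\op(P)^{m}$ is stated in the paper only for $0<m<1$, so for general $m>0$ you should remark that it extends by writing $m$ as an integer plus a fractional part and using that $\op$ is multiplicative on convolution; alternatively, you could bypass the spectral computation entirely and get $\|f\|_{(m)}\sim\|f\star P^{m}\|+\|f\|$ from (\ref{equi1}) and Corollary~\ref{equival}, which is closer to the resources the paper actually displays. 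These are cosmetic; the argument is sound and matches the intended route.
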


\begin{corollary}\label{para}
Let $A\in S^m(\Ge)$, where $m>0$, and let $\op(A)$ be positive in $L^2(\Ge)$. Then $A$ has a parametrix if and only if there exists $C>0$ such that
\begin{equation}\label{last}
||f\star{A}\|+\|f\|\ge C\|f\star P^m\|,
\qquad
f\in\Schw(\Ge).
\end{equation}
\end{corollary}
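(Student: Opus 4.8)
\emph{Overview.} The statement is an equivalence whose two halves are of very different character: the forward implication follows softly from the calculus of the previous sections, while for the converse the role of positivity is to upgrade the weak estimate \eqref{last} to a hypothesis one can feed into Corollary \ref{comp}. I would organize the argument accordingly.

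\emph{The forward implication.} Suppose $B\in S^{-m}(\Ge)$ is a parametrix, so that $G:=A\star B-\delta_0\in\Schw(\Ge)$. The plan is to compare $V_1^m$ with $A$. By Proposition \ref{calculus} the distribution $B\star V_1^m$ lies in $S^0(\Ge)$, hence by Proposition \ref{cv} convolution on the right by $B\star V_1^m$ is bounded on $L^2(\Ge)$; moreover $G\star V_1^m\in\Schw(\Ge)\subset L^1(\Ge)$. Writing
\[
V_1^m=\delta_0\star V_1^m=A\star B\star V_1^m-G\star V_1^m
\]
and applying the above to $f\star A\star(B\star V_1^m)$ and Young's inequality to $f\star(G\star V_1^m)$, one gets $\|f\star V_1^m\|\le C_1\|f\star A\|+C_2\|f\|$ for $f\in\Schw(\Ge)$, and Corollary \ref{equival} (which gives $\|f\star P^m\|\le C\|f\star V_1^m\|$) then yields \eqref{last}. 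No positivity is needed here.

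\emph{The converse.} Assume \eqref{last} and $\op(A)\ge0$; then $A=A^{\star}$, so $f\star A=\op(A)f$. The key observation is that positivity absorbs the offending $\|f\|$ on the left of \eqref{last}: since $\langle\op(A)f,f\rangle\ge0$,
\[
\|f\star(A+\delta_0)\|^2=\|\op(A)f\|^2+2\langle\op(A)f,f\rangle+\|f\|^2\ge\|\op(A)f\|^2+\|f\|^2,
\]
whence $\|f\star(A+\delta_0)\|\ge\tfrac1{\sqrt2}\bigl(\|f\star A\|+\|f\|\bigr)\ge C'\bigl(\|f\star P^m\|+\|f\|\bigr)$. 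Thus the symmetric distribution $A+\delta_0\in S^m(\Ge)$ satisfies condition (4) of Corollary \ref{comp}, so there exists $R\in S^{-m}(\Ge)$ with $R\star(A+\delta_0)=(A+\delta_0)\star R=\delta_0$, i.e. $R\star A=A\star R=\delta_0-R$. It then remains to convert $R$ into a genuine parametrix by a Neumann-type correction: by Proposition \ref{calculus} one has $R^{\star k}\in S^{-km}(\Ge)$, the orders $-km$ decrease to $-\infty$ because $m>0$, so I would take the asymptotic sum $B\approx\sum_{k=1}^{\infty}R^{\star k}\in S^{-m}(\Ge)$ in the sense of \eqref{asymptotic}, for which $B-\sum_{k=1}^{N}R^{\star k}\in S^{-(N+1)m}(\Ge)$. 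Telescoping $R^{\star k}\star A=R^{\star(k-1)}-R^{\star k}$ gives $\sum_{k=1}^{N}R^{\star k}\star A=\delta_0-R^{\star N}$, so using Proposition \ref{calculus} once more,
\[
B\star A-\delta_0=\Bigl(B-\sum_{k=1}^{N}R^{\star k}\Bigr)\star A-R^{\star N}\in S^{-Nm}(\Ge)\qquad\text{for every }N,
\]
and since $\bigcap_N S^{-Nm}(\Ge)=S^{-\infty}(\Ge)$ consists exactly of the Schwartz functions, $B\star A-\delta_0\in\Schw(\Ge)$; the symmetric identity $A\star R^{\star k}=R^{\star(k-1)}-R^{\star k}$ gives $A\star B-\delta_0\in\Schw(\Ge)$ as well, so $B$ is a parametrix of $A$.

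\emph{Main obstacle.} Honestly there is no deep analytic point once Corollary \ref{comp} is in hand; the substance is the two elementary remarks above — that positivity turns \eqref{last} into the full ellipticity hypothesis for $A+\delta_0$, and that asymptotic summation mechanically upgrades the exact inverse of $A+\delta_0$ to an inverse of $A$ modulo $S^{-\infty}(\Ge)=\Schw(\Ge)$. The two things I would be careful to verify are that the asymptotic summation stated in the previous section for the classes $S_0^m$ applies verbatim to $S^m$ (it does, being the standard Borel-type construction), and that the identification $S^{-\infty}(\Ge)=\Schw(\Ge)$ under the Fourier transform is exactly what closes the telescoping estimate.
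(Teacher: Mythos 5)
Your proof is correct and follows essentially the same route as the paper's: the forward implication compares $A$ with $V_1^m$ (hence with $P^m$, via Corollary~\ref{equival}) through the parametrix identity, and the converse uses positivity to upgrade \eqref{last} to condition~(4) of Corollary~\ref{comp} for $A+\delta_0$, whose exact inverse $B_1$ then produces the parametrix as the asymptotic sum $B\approx\sum_{k\ge1}B_1^k$. The only difference is that you spell out the telescoping verification that this asymptotic sum is indeed a two-sided parametrix, a detail the paper leaves to the reader (and which, as noted after \eqref{asymptotic}, also follows from the remark that a symmetric $A$ has coinciding left and right parametrices).
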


\begin{proof}
Let $B\in S^{-m}(\Ge)$ be a parametrix for $A$. Then
$$
B\star A=\delta_0+h,
$$
where $h\in\Schw(\Ge)$. Consequently,
$$
P^m=V_1^m\star B\star A+ g
$$
where $g\in L^1(\Ge)$. Now, $V_1^m\star B\in S^{\,0}(\Ge)$  so it is easy to see that the estimate (\ref{last}) holds.

Suppose now that (\ref{last}) holds true. Since $A$ is positive this implies
$$
\|f\star P^m\|\le C_1\|f\star({A}+\delta_0)\|,
$$
which, by Corollary \ref{comp}, implies that $A+\delta_0\in S^{m}(\Ge)$ has an inverse $B_1\in S^{-m}$. Thus
$$
B_1\star A=\delta_0-B_1,
$$
and the parametrix $B$ can be found as an asymptotic series
$$
B\approx\sum_{k=1}^{\infty}B_1^k.
$$
\end{proof}

\bigskip
\section{Rockland operators}\label{rockland}

A left-invariant homogeneous differential operator $R$ is said to be a {\it Rockland operator} if for every nontrivial irreducible unitary representation $\pi$ of $\Ge$, $\pi_R$ is injective on the space of $C^{\infty}$-vectors of $\pi$. 

Let $R$ be a left-invariant differential operator homogeneous of degree $\lbd=-Q-m$, that is,
$$
R(f\circ\delta_t)=t^mRf,
\qquad
f\in\Schw(\Ge),\quad t>0.
$$
It is well-known that the following conditions are equivalent:

\begin{enumerate}
\item
$R$ is a Rockland operator,
\item
$R$ is hypoelliptic,
\item
For every regular kernel $T$ of order $m$, there exists a constant $C>0$ such that
$$
\|\op(T)f\|\le C\|Rf\|,
\qquad
f\in\Schw(\Ge).
$$
\end{enumerate}

That (1) is equivalent to (2) was proved by Helffer-Nourrigat \cite{helffer} with a contribution from Beals \cite{beals} and Rockland \cite{rockland}. Helffer-Nourrigat \cite{helffer} also contains the proof of equivalence of (1)-(3) for $\op(T)$ being a differential operator. The remaining part was obtained by the author in \cite{glowacki1} and \cite{glowacki3}. 

It has been proved by Melin \cite{melin} that a Rockland operator on a {\it stratified} homogenenous group has a parametrix. We are going to show that in fact this is so on any homogeneous group.

\begin{corollary}
A Rockland operator on $\Ge$ has a parametrix.
\end{corollary}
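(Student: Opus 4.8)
The plan is to deduce the statement from Corollary~\ref{para}, applied not to $R$ itself --- which is in general neither positive nor bounded below on $L^2(\Ge)$ --- but to the symmetric positive operators $R^{\star}R$ and $RR^{\star}$. First I would record the elementary reduction: a left-invariant differential operator $R$, homogeneous of degree $m>0$, is of the form $R=\op(A)$ for a distribution $A$ supported at the origin whose Fourier transform $\widehat{A}$ is a polynomial homogeneous of degree $m$; such a polynomial satisfies $|D^{\alpha}\widehat{A}(\xi)|\le C_{\alpha}\rho(\xi)^{m-|\alpha|}$ (the derivative is homogeneous of degree $m-|\alpha|$ and vanishes once $m-|\alpha|<0$, while $\rho$ is bounded below), so $A\in S^{m}(\Ge)$ and a parametrix for $R$ is precisely a parametrix for $A$ in the sense defined above. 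Since $\op(A)^{\star}=\op(A^{\star})$ with $A^{\star}\in S^{m}(\Ge)$, Proposition~\ref{calculus} gives $A^{\star}\star A\in S^{2m}(\Ge)$; this kernel is symmetric and $\op(A^{\star}\star A)=\op(A)^{\star}\op(A)\ge0$.

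The essential point is that $R^{\star}R=\op(A^{\star}\star A)$ is again a Rockland operator: for a nontrivial irreducible unitary representation $\pi$ and a $C^{\infty}$-vector $v$ with $\pi_{R^{\star}R}v=0$ one has $\|\pi_{R}v\|^{2}=\langle\pi_{R^{\star}R}v,v\rangle=0$, hence $\pi_{R}v=0$ and so $v=0$ by the hypothesis on $R$. Being a left-invariant homogeneous operator of degree $2m$, $R^{\star}R$ thus satisfies the equivalent conditions recalled in this section; applying condition (3) with the regular kernel $P^{2m}$ of order $2m$ gives
$$
\|f\star P^{2m}\|=\|\op(P^{2m})f\|\le C\|R^{\star}Rf\|=C\|f\star(A^{\star}\star A)\|,\qquad f\in\Schw(\Ge),
$$
whence $\|f\star(A^{\star}\star A)\|+\|f\|\ge C^{-1}\|f\star P^{2m}\|$. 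By Corollary~\ref{para}, $A^{\star}\star A$ admits a parametrix $B_{1}\in S^{-2m}(\Ge)$; from $B_{1}\star(A^{\star}\star A)-\delta_{0}\in\Schw(\Ge)$ we obtain $(B_{1}\star A^{\star})\star A-\delta_{0}\in\Schw(\Ge)$, so $B_{1}\star A^{\star}\in S^{-m}(\Ge)$ (Proposition~\ref{calculus}) is a left parametrix for $A$.

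To produce a right parametrix I would run the same argument for the formal adjoint $R^{\star}$, which is again a Rockland operator (the transpose, equivalently the formal adjoint, of a hypoelliptic left-invariant homogeneous operator is hypoelliptic, equivalently Rockland). Then $RR^{\star}=\op(A\star A^{\star})$ is a positive Rockland operator of order $2m$, Corollary~\ref{para} yields a parametrix $B_{2}\in S^{-2m}(\Ge)$ of $A\star A^{\star}$, and $(A\star A^{\star})\star B_{2}-\delta_{0}\in\Schw(\Ge)$ gives $A\star(A^{\star}\star B_{2})-\delta_{0}\in\Schw(\Ge)$, so $A^{\star}\star B_{2}\in S^{-m}(\Ge)$ is a right parametrix for $A$. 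Since a left and a right parametrix coincide modulo $\Schw(\Ge)$, as observed after the definition of parametrix, $B:=B_{1}\star A^{\star}$ is a genuine two-sided parametrix for $A$, that is, for $R$.

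The main obstacle --- and essentially the only step that is not bookkeeping --- is the verification that $R^{\star}R$ and $RR^{\star}$ are Rockland operators; this rests on the identity $\langle\pi_{R^{\star}R}v,v\rangle=\|\pi_{R}v\|^{2}$ on $C^{\infty}$-vectors (hence on the fact that $\pi_{R^{\star}}$ acts there as the adjoint of $\pi_{R}$), together with the standard fact that formal adjoints of Rockland operators are Rockland. Everything else is a direct appeal to Corollary~\ref{para}, the composition calculus of Proposition~\ref{calculus}, and the remark that parametrices of symmetric kernels are two-sided. The minor technical checks to carry out in passing are that the polynomial symbol of $R$ genuinely lies in $S^{m}(\Ge)$ and that $P^{2m}$ is a regular kernel of order $2m$.
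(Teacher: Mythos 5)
Your argument is correct and follows the same route as the paper's proof, which reads in its entirety: ``Without any loss of generality we may assume that $R$ is positive. Then the assertion follows from (3) and Corollary~\ref{para}.'' What you have done is make the ``without loss of generality'' explicit by passing to the positive Rockland operators $R^{\star}R$ and $RR^{\star}$ of order $2m$, applying condition (3) with the regular kernel $P^{2m}$ together with Corollary~\ref{para} to each, and recombining the resulting one-sided parametrices for $R$ into a two-sided one via the remark that a left and a right parametrix coincide modulo $\Schw(\Ge)$.
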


\begin{proof}
Without any loss of generality we may assume that $R$ is positive. Then the assertion follows from (3) and Corollary \ref{para}.
\end{proof}

Thus we have one more condition equivalent to (1)-(3). However, the techniques of the present paper can be applied directly to Rockland operators rending unnecessary any reference to Theorem \ref{ef} or Corollary \ref{para}. What is needed are well-known properties of Rockland operators and the symbolic calculus of Proposition \ref{calculus}. Here is a brief sketch of a direct parametrix construction for a Rockland operator $R$.

We may assume that $R$ is positive. By Folland-Stein \cite{folland}, Chapter 4.B, $R$ is essentially selfadjoint on $L^2(\Ge)$ with $\Schw(\Ge)$ for its core domain. Moreover, the semigroup generated by it consists of convolution operators with kernels 
$$
p_t(x)=t^{-Q/m}p_1(t^{-1/m}x),
$$
where $p_1$ is a Schwartz class function. Note that $R=\op(R\delta_0)$. Let $S=(\delta_0+R\delta_0)^{-1}$. It follows that
$$
\widehat{S}(\xi)=\int_0^{\infty}e^{-t}\widehat{p_1}(t^{1/m}\xi)\,dt
$$
is a smooth function satisfying the estimates which show that $S\in S^{-m}(\Ge)$. Moreover,
$$
S\star R\delta_0=\delta_0-S,
$$
and by the usual argument the asymptotic series
$$
S_1\approx\sum_{k=1}^{\infty}S^k
$$
defines a parametrix for $R$ (cf. Melin \cite{melin}).

\section{Acknowledgements}
The author is grateful to M. Christ and F. Ricci for their helpful remarks on the subject of the present paper. He also thanks the referee for his careful reading of the manuscript and pointing out editorial omissions.

%\affiliationone{
  % Pawe{\l} G{\l}owacki\\
   %Institute of Mathematics,\\
   %University of Wroc{\l}aw,\\
   %pl. Grunwaldzki 2/4,\\
   %50-384 Wroc{\l}aw, Poland,\\
   %\email{glowacki@math.uni.wroc.pl}
   %}

\begin{thebibliography}{99}
\bibitem{beals}
{R. Beals}, {Op\'erateurs invariants hypoelliptiques sur un groupe de Lie nilpotent}, {\em Seminaire Goulaouic-Schwartz} 1976-1977, expos\'e no XIX, 1-8, 

\bibitem{beals1}
{R. Beals}, Weighted distribution spaces and pseudodifferential operators, {\em Journal d'analyse math\'ematique}, 39 (1981), 131-187,

\bibitem{christ-geller}
{M. Christ \and D. Geller}, {Singular integral characterization of Hardy spaces on homogeneous groups}, {\em Duke Math. J. }51 (1984), 547-598,

\bibitem{cggp}
{M. Christ \and D. Geller \and P. G{\l}owacki \and L. Polin}, {Pseudodifferential operators on groups with dilations}, {\em Duke. Math. J. }68 (1992), 31-65,

\bibitem{duflo}
{M. Duflo}, {Repr\'esentations de semi-groupes de mesures sur un groupe localment compact}, {\em Ann. Inst. Fourier, Grenoble} 28 (1978), 225-249,

\bibitem{dziubanski} 
{J. Dziuba\'nski}, {A remark on a Marcinkiewicz-H\"ormander multipiler theorem for some nondifferential convolution operators}, {\em Colloq. Math.} 58 (1989), 77-83.

\bibitem{folland}
{G.B. Folland \and E.M. Stein}, {Hardy spaces on homogeneous groups,}
{\em Princeton University Press}, Princeton NJ 1982,

\bibitem{glowacki1}
P. G{\l}owacki, {Stable semigroups of measures as commutative approximate identities on non-graded homogeneous groups}, {\em Inventiones Mathematicae} 83 (1986), 557-582,

\bibitem{glowacki2}
P. G{\l}owacki, {An inversion problem for singular integral operators on homogeneous groups}, {\em Studia mathematica} 87 (1987), 53-69,

\bibitem{glowacki}
P. G{\l}owacki, {The Melin calculus for general homogeneous groups}, {\em Ark. Mat.} 45 (2007), 31-48, 

\bibitem{glowacki3}
P. G{\l}owacki, {The Rockland condition for nondifferential convolution operators on homogeneous groups II}, {\em Studia Math.} 98 (1991), 99-114, 

\bibitem{helffer}
B. Helffer, J. Nourrigat, {Caracterisation des op\'erateurs hypoelliptiques homogenes invariants \`a gauche sur un groupe de Lie nilpotent gradu\'e}, {\em Comm. Partial Differential Equations} 4 (1979), 899-958,

\bibitem{hormander}
L. H\"ormander, {The analysis of linear partial differential operators}
{\em vol. III, Berlin - Heidelberg - New York - Tokyo} 1983,

\bibitem{melin}
A. Melin, {Parametrix constructions for right-invariant differential
operators on nilpotent Lie groups}, {\em Ann. Glob. Anal. Geom.} 1 (1983), 79-130,

\bibitem{ricci}
F. Ricci, Calder\'on-Zygmund kernels on nilpotent Lie groups, {\em Proceedings of the Harmonic Analysis conference}, University of Minnesota, April 20 -- May 1, 1981 

\bibitem{rockland}
C. Rockland, {Hypoellipticity on the Heisenberg group: representation theoretic criteria}, {\em Trans. Amer. Math. Soc.} 240 (1978), 1-52,

\bibitem{stein}
E.M. Stein, {Harmonic analysis}, {\em Princeton University Press}, Princeton NJ 1993,

\bibitem{yosida}
K. Yosida, {Functional analysis}, {\em Berlin-Heidelberg-New York} 1980.

\end{thebibliography}
   \end{document}